%
%
%
%
%
\RequirePackage{fix-cm}
\documentclass[smallextended]{svjour3}       
\smartqed  
\usepackage{graphicx}
\usepackage{latexsym,amsfonts,amsmath,amssymb,mathrsfs,url}
\usepackage{siunitx}
%
%
%
\newcommand{\rd}{\, \mathrm{d}}
\newcommand{\ri}{\mathrm{i}}
\newcommand{\bszero}{\boldsymbol{0}}
\newcommand{\bsh}{\boldsymbol{h}}
\newcommand{\bsk}{\boldsymbol{k}}
\newcommand{\bsl}{\boldsymbol{\ell}}
\newcommand{\bsx}{\boldsymbol{x}}
\newcommand{\bsy}{\boldsymbol{y}}
\newcommand{\bsz}{\boldsymbol{z}}
\newcommand{\bsalpha}{\boldsymbol{\alpha}}
\newcommand{\bssigma}{\boldsymbol{\sigma}}
\newcommand{\bsgamma}{\boldsymbol{\gamma}}
\newcommand{\bstau}{\boldsymbol{\tau}}
\newcommand{\Hcal}{\mathcal{H}}
\newcommand{\NN}{\mathbb{N}}
\newcommand{\RR}{\mathbb{R}}
\newcommand{\ZZ}{\mathbb{Z}}
\newcommand{\rsob}{\mathrm{sob}}
\newcommand{\rcos}{\mathrm{cos}}
\newcommand{\rkor}{\mathrm{kor}}
\newcommand{\wor}{\mathrm{wor}}
\newcommand{\odd}{\mathrm{odd}}
\newcommand{\even}{\mathrm{even}}
\newcommand{\rsobodd}{\mathrm{sob}(\mathrm{odd})}
\newcommand{\rsobodda}{\mathrm{sob}(\mathrm{odd}+\alpha)}
\newcommand{\rsoboddbdry}{\mathrm{sob}(\mathrm{odd-bdry}0)}
\newcommand{\rsym}{\mathrm{sym}}
\spnewtheorem*{xproof}{}{\itshape}{\rmfamily}

\renewenvironment{proof}[1][\proofname]
 {\xproof}
 {\endxproof}
\allowdisplaybreaks
%
%
\begin{document}

\title{Lattice rules in non-periodic subspaces of \\ Sobolev spaces}


\author{Takashi Goda   \and Kosuke Suzuki \and Takehito Yoshiki
}


\institute{T. Goda \at
School of Engineering, University of Tokyo, 
7-3-1 Hongo, Bunkyo-ku, Tokyo 113-8656, Japan \\
\email{goda@frcer.t.u-tokyo.ac.jp}
           \and
           K. Suzuki \at
Graduate School of Science, Hiroshima University, 
1-3-1 Kagamiyama, Higashi-Hiroshima, 739-8526, Japan. 
JSPS Research Fellow \\
\email{kosuke-suzuki@hiroshima-u.ac.jp}
          \and
          T. Yoshiki \at
Department of Applied Mathematics and Physics, Graduate School of Informatics, Kyoto University, 
Kyoto 606-8561, Japan \\
\email{yoshiki.takehito.47x@st.kyoto-u.ac.jp}}

\date{Received: date / Accepted: date}

\maketitle

\begin{abstract}
We investigate quasi-Monte Carlo (QMC) integration over the $s$-dimensional unit cube based on rank-1 lattice point sets in weighted non-periodic Sobolev spaces $\Hcal(K_{\alpha,\bsgamma,s}^{\rsob})$ and their subspaces of high order smoothness $\alpha>1$, where $\bsgamma$ denotes a set of the weights. A recent paper by Dick, Nuyens and Pillichshammer has studied QMC integration in half-period cosine spaces with smoothness parameter $\alpha>1/2$ consisting of non-periodic smooth functions, denoted by $\Hcal(K_{\alpha,\bsgamma,s}^{\rcos})$, and also in the sum of half-period cosine spaces and Korobov spaces with common parameter $\alpha$, denoted by $\Hcal(K_{\alpha,\bsgamma,s}^{\rkor+\rcos})$. Motivated by the results shown there, we first study embeddings and norm equivalences on those function spaces. In particular, for an integer $\alpha$, we provide their corresponding norm-equivalent subspaces of $\Hcal(K_{\alpha,\bsgamma,s}^{\rsob})$. This implies that $\Hcal(K_{\alpha,\bsgamma,s}^{\rkor+\rcos})$ is strictly smaller than $\Hcal(K_{\alpha,\bsgamma,s}^{\rsob})$ as sets for $\alpha \geq 2$, which solves an open problem by Dick, Nuyens and Pillichshammer. Then we study the worst-case error of tent-transformed lattice rules in $\Hcal(K_{2,\bsgamma,s}^{\rsob})$ and also the worst-case error of symmetrized lattice rules in an intermediate space between $\Hcal(K_{\alpha,\bsgamma,s}^{\rkor+\rcos})$ and $\Hcal(K_{\alpha,\bsgamma,s}^{\rsob})$. We show that the almost optimal rate of convergence can be achieved for both cases, while a weak dependence of the worst-case error bound on the dimension can be obtained for the former case.
\subclass{65C05 \and 65D30 \and 65D32}
\end{abstract}

\section{Introduction}
In this paper we study multivariate integration of smooth functions defined over the $s$-dimensional unit cube $[0,1]^s$.
For an integrable function $f\colon [0,1]^s \to \RR$, we denote the integral of $f$ by
\[ I(f) = \int_{[0,1]^s}f(\bsx)\rd \bsx. \]
A quasi-Monte Carlo (QMC) rule denotes an approximation of $I(f)$ by the average of function evaluations on a finite point set $P\subset [0,1]^s$:
\[ Q_{P}(f) = \frac{1}{|P|}\sum_{\bsx\in P}f(\bsx), \]
where we interpret $P$ as a set in which the multiplicity of elements matters.
For a function space $V$ with norm $\|\cdot\|_V$, the worst-case error of a QMC rule using a point set $P\subset [0,1]^s$ is defined by
\[ e^{\wor}(P;V) := \sup_{\substack{f\in V\\ \|f\|_V\leq 1}}\left| Q_{P}(f)-I(f) \right|. \]
The aim of this paper is to construct a good deterministic point set $P$ which makes the worst-case error small for a specific function space.

There are two main families for QMC point sets: digital nets and integer lattices.
We refer the reader to \cite{DKS13,DPbook,Nbook,SJbook} and the references cited therein for general information on this subject.
In this paper we focus on integer lattices, in particular, rank-1 lattice point sets which are defined as follows:
\begin{definition}
Let $N,s\in \NN$ and $\bsz\in \{1,\ldots,N-1\}^s$. A rank-1 lattice point set with $N$ points and generating vector $\bsz$ is defined by
\[ P_{N,\bsz} := \left\{ \left\{ \frac{n\bsz}{N}\right\} \,\big|\, 0\leq n < N \right\} , \]
where $\{x\}=x-\lfloor x\rfloor$ denotes the fractional part of $x\in \RR$ and is applied component-wise for vectors.
A QMC rule using a (rank-1) lattice point set is called a (rank-1) lattice rule.
\end{definition}
\noindent
For weighted Korobov spaces consisting of periodic functions whose Fourier coefficients decay algebraically fast, 
it is well known that there are good generating vectors such that the corresponding lattice rules 
achieve the almost optimal rate of convergence of the worst-case error 
and also hold a good dependence of the worst-case error bound on the dimension $s$ \cite{Dic04,DSWW06,Kuo03,SR02,SW01}.
Here the weights of function spaces play a role in moderating 
the relative importance of different variables or groups of variables \cite{SW98}.

It is much less known, however, whether or not there are good lattice rules 
for function spaces consisting of non-periodic smooth functions.
In \cite{SW01}, it was proven that the shift-averaged worst-case error of randomly shifted lattice rules 
in weighted non-periodic Sobolev spaces of first order smoothness, i.e., function spaces which consist of non-periodic functions 
such that the mixed first partial derivatives are square-integrable, 
coincides with the worst-case error of  (deterministic) lattice rules 
in Korobov spaces with modified weights.
Thereafter it was shown in \cite{Hic02} that there exist good generating vectors such that
the ``randomly shifted and then tent-transformed lattice rules'' achieve the almost optimal order of convergence
$N^{-2+\varepsilon}$, $\varepsilon>0$, of the shift-averaged worst-case error 
in weighted non-periodic Sobolev spaces of second order smoothness.
The point is, however, that the results shown in these papers rely on a random shift of lattice point sets, 
so that the algorithm is not completely deterministic.

Recently, in order to address this issue, Dick, Nuyens and Pillichshammer \cite{DNP14}
introduced so-called weighted half-period cosine spaces which consist of non-periodic smooth functions, 
and together with a successive paper \cite{CKNS16},
it has been proved that the worst-case error of (deterministic) tent-transformed lattice rules in those spaces
is bounded above by the worst-case error of lattice rules in Korobov spaces with modified weights.
This means, there are good deterministic tent-transformed lattice rules which 
achieve the almost optimal rate of convergence of the worst-case error.
Moreover, in \cite{DNP14}, the sum of half-period cosine space and Korobov space was considered 
and symmetrized lattice rules were shown to achieve the almost optimal rate of convergence in this function space
(but with a stronger dependence of the worst-case error bound on the dimension $s$).

We would emphasize, however, that the smoothness of functions in the half-period cosine space is measured
not by the differentiability but by the decay rate of the cosine coefficients of functions.
The only known exception is that the half-period cosine space with the smoothness parameter $\alpha=1$ 
and the Sobolev space of first order smoothness coincides.
Therefore, the measure of smoothness can be equivalently transformed 
from the decay rate of the cosine coefficients to the first order differentiability.
Also for the sum of half-period cosine space and Korobov space, 
it is unknown whether the smoothness of functions can be interpreted in terms of the differentiability.
In fact, the authors of \cite{DNP14} commented
\begin{center}
``We do not know whether $\Hcal(K_{\alpha,\bsgamma,s}^{\rsob})$ differs from $\Hcal(K_{\alpha,\bsgamma,s}^{\rkor+\rcos})$ for $\alpha>1$.''
\end{center}
Here $\Hcal(K_{\alpha,\bsgamma,s}^{\rsob})$ denotes the weighted non-periodic Sobolev space of $\alpha$-th order smoothness, and 
$\Hcal(K_{\alpha,\bsgamma,s}^{\rkor+\rcos})$ does the sum of 
the weighted half-period cosine space and the weighted Korobov space with the common parameter $\alpha$.

\subsection{Summary of main findings}\label{subsec:main_findings}
In the light of the above-mentioned researches, the main contribution of this paper is threefold:
\begin{enumerate}
\item In Section~\ref{sec:embed} we provide two strict subspaces of $\Hcal(K_{\alpha,\bsgamma,s}^{\rsob})$ which are norm equivalent 
to $\Hcal(K_{\alpha,\bsgamma,s}^{\rcos})$ and $\Hcal(K_{\alpha,\bsgamma,s}^{\rkor+\rcos})$, respectively.
Here $\Hcal(K_{\alpha,\bsgamma,s}^{\rcos})$ denotes the weighted half-period cosine space with smoothness parameter $\alpha$.
This implies that $\Hcal(K_{\alpha,\bsgamma,s}^{\rkor+\rcos})$ is strictly smaller than $\Hcal(K_{\alpha,\bsgamma,s}^{\rsob})$ as sets,
which solves the above problem.
\item In Section~\ref{subsec:tent} we prove that the worst-case error of tent-transformed lattice rules in $\Hcal(K_{2,\bsgamma,s}^{\rsob})$ 
is bounded above by the squared worst-case error of lattice rules in Korobov space 
with the smoothness parameter $1$ and modified weights.
\item In Section~\ref{subsec:sym} we consider an intermediate Sobolev space, denoted by $\Hcal(K_{\alpha,\bsgamma,s}^{\rsobodda})$,
between $\Hcal(K_{\alpha,\bsgamma,s}^{\rkor+\rcos})$ and $\Hcal(K_{\alpha,\bsgamma,s}^{\rsob})$,
and show that the worst-case error of symmetrized lattice rules is bounded above by
the squared worst-case error of lattice rules in Korobov space with the smoothness parameter $\alpha/2$ and modified weights.
\end{enumerate}
The latter two results imply that there are good generating vectors such that the tent-transformed and symmetrized lattice rules 
achieve the almost optimal rates of convergence $N^{-2+\varepsilon}$ and $N^{-\alpha+\varepsilon}$ with $\varepsilon>0$, respectively, 
in the corresponding function spaces.
In fact, the fast component-by-component algorithm due to \cite{NC06} is directly available to find such good generating vectors.
We note that the number of function evaluations for symmetrized lattice rules grows exponentially in the dimension $s$,
so that the worst-case error bound depends exponentially on $s$ regardless of the weights $\bsgamma$, 
which does not happen for tent-transformed lattice rules.

Whether or not deterministic tent-transformed lattice rules can achieve $O(N^{-2+\varepsilon})$ convergence 
in $\Hcal(K_{2,\bsgamma,s}^{\rsob})$ has remained unknown for a while after the work of Hickernell \cite{Hic02}. 
It can be seen from our first main result that the results in \cite{CKNS16,DNP14} cannot reach this question.
Our second main result gives an affirmative answer to this question.

\subsection{Basic notation}
Throughout this paper, we denote by $\ZZ$ the set of integers and by $\NN$ the set of positive integers.
We write $\NN_0=\NN\cup \{0\}$, and $1:s =\{1,\ldots,s\}$ for $s\in \NN$.
For a vector $\bsk\in \ZZ^s$ and a subset $u\subseteq 1:s$, we write $\bsk_u=(k_j)_{j\in u}$ and $(\bsk_u,\bszero)=\bsl\in \ZZ^s$ 
where $\ell_j=k_j$ if $j\in u$ and $\ell_j=0$ otherwise.

\section{Reproducing kernel Hilbert spaces}\label{sec:RKHS}
In order to study embeddings and norm equivalences of normed function spaces,
we introduce several reproducing kernel Hilbert spaces (RKHSs) in this section.
Our standard reference on the theory of RKHS is \cite{Aro50}.

\subsection{Korobov spaces}\label{subsec:Korobov}
For $f\colon [0,1]^s\to \RR$ and $\bsh\in \ZZ^s$, the $\bsh$-th Fourier coefficient of $f$ is defined by
\[ \hat{f}(\bsh) := \int_{[0,1]^s}f(\bsx)e^{-2\pi \ri \bsh\cdot \bsx}\rd \bsx, \]
where the dot product $\cdot$ denotes the usual inner product in $\RR^s$.
For a set of weights $\bsgamma=(\gamma_u)_{u\subseteq 1:s}$, $\gamma_u\geq 0$,
the reproducing kernel of the weighted Korobov space $\Hcal(K_{\alpha,\bsgamma,s}^{\rkor})$ with smoothness parameter $\alpha\in \RR$, $\alpha>1/2$, is given by
\[ K_{\alpha,\bsgamma,s}^{\rkor}(\bsx,\bsy) := \sum_{\bsh\in \ZZ^s}r_{\alpha,\bsgamma,s}(\bsh)e^{2\pi \ri \bsh \cdot (\bsx-\bsy)}, \]
where the function $r_{\alpha,\bsgamma,s}\colon \ZZ^s\to \RR$ is defined by $r_{\alpha,\bsgamma,s}(\bszero)=1$ and
\begin{align}\label{eq:weight_function}
r_{\alpha,\bsgamma,s}(\bsh_u,\bszero) = \gamma_u\prod_{j\in u}|h_j|^{-2\alpha} 
\end{align}
for vectors $\bsh_u\in (\ZZ\setminus \{0\})^{|u|}$ with a non-empty subset $u\subseteq 1:s$.
(Here we note that the smoothness parameter $\alpha$ differs by a factor of 2 from 
what has been used in some literature, see \cite{DSWW06,SW01}, so that
one needs to transfer the results given in those papers carefully.)
In particular, when $\alpha$ is a positive integer, it follows from the Fourier series of the Bernoulli polynomial of degree $2\alpha$, denoted by $B_{2\alpha}$, that the reproducing kernel reduces to
\begin{align*}
 K_{\alpha,\bsgamma,s}^{\rkor}(\bsx,\bsy) & = 1+\sum_{\emptyset \neq u\subseteq 1:s}\gamma_u\prod_{j\in u}\sum_{h_j\in \ZZ\setminus \{0\}}\frac{e^{2\pi \ri h_j (x_j-y_j)}}{|h_j|^{2\alpha}} \\
& = 1+\sum_{\emptyset \neq u\subseteq 1:s}\gamma_u\prod_{j\in u}\frac{(2\pi)^{2\alpha}}{(-1)^{\alpha+1}(2\alpha)!}B_{2\alpha}(|x_j-y_j|).
\end{align*}

The inner product in $\Hcal(K_{\alpha,\bsgamma,s}^{\rkor})$ is given by
\begin{align*}
 \langle f,g\rangle_{K_{\alpha,\bsgamma,s}^{\rkor}} & = \sum_{\bsh\in \ZZ^s}\frac{\hat{f}(\bsh)\hat{g}(\bsh)}{r_{\alpha,\bsgamma,s}(\bsh)}\\
& = \hat{f}(\bszero)\hat{g}(\bszero)+\sum_{\emptyset \neq u\subseteq 1:s}\gamma_u^{-1}\sum_{\bsh_u\in (\ZZ\setminus \{0\})^{|u|}}\hat{f}(\bsh_u,\bszero)\hat{g}(\bsh_u,\bszero)\prod_{j\in u}|h_j|^{2\alpha} , 
\end{align*}
where, for $u\subseteq 1:s$ such that $\gamma_u=0$, we assume
\[ \sum_{\bsh_u\in (\ZZ\setminus \{0\})^{|u|}}\hat{f}(\bsh_u,\bszero)\hat{g}(\bsh_u,\bszero)\prod_{j\in u}|h_j|^{2\alpha}=0, \]
and formally set $0/0:=0$.
In what follows, we shall make similar assumptions on the projections for other weighted Hilbert spaces without any further notice.

It is clear from the form of the inner product that the smoothness parameter $\alpha$ 
moderates the decay rate of the Fourier coefficients.
However, the parameter $\alpha$ is also related to the differentiability but only of periodic functions.
For simplicity, let $s=1$ and $\alpha$ be a positive integer. 
Then the squared norm of the space $\Hcal(K_{\alpha,\gamma,1}^{\rkor})$ can be simplified to
\[ \|f\|_{K_{\alpha,\gamma,1}^{\rkor}}^2 = \left( \int_{0}^{1}f(x)\rd x\right)^2 + \frac{1}{(2\pi)^{2\alpha}\gamma}\int_{0}^{1}( f^{(\alpha)}(x))^2 \rd x,\]
where $f^{(\alpha)}$ denotes the $\alpha$-th derivative of $f$, see for instance \cite[Appendix~A]{Novak2008tmp}, 
and $\Hcal(K_{\alpha,\gamma,1}^{\rkor})$ is equivalent to the set
\begin{align*}
\Big\{ f\colon [0,1]\to \RR\, \big|\, & f^{(\tau)}\; \text{absolutely continuous with $f^{(\tau)}(0)=f^{(\tau)}(1)$} \\
& \quad \text{for $\tau=0,\ldots,\alpha-1$},\quad f^{(\alpha)}\in L^2[0,1]\Big\}.
\end{align*}

\subsection{Half-period cosine spaces}
For $f\colon [0,1]^s\to \RR$ and $\bsk\in \NN_0^s$, the $\bsk$-th cosine coefficient of $f$ is defined by
\[ \tilde{f}(\bsk) := \int_{[0,1]^s}f(\bsx)\prod_{j=1}^{s}(2-\delta_{0,k_j})^{1/2}\cos(\pi k_jx_j)\rd \bsx, \]
where $\delta$ denotes the Kronecker delta function.
The reproducing kernel of the weighted half-period cosine space $\Hcal(K_{\alpha,\bsgamma,s}^{\rcos})$ with the smoothness parameter $\alpha\in \RR$, $\alpha>1/2$, introduced in \cite{DNP14} is given by
\[ K_{\alpha,\bsgamma,s}^{\rcos}(\bsx,\bsy) := 1+\sum_{\emptyset \neq u\subseteq 1:s}\sum_{\bsk_u\in \NN^{|u|}}r_{\alpha,\bsgamma,s}(\bsk_u,\bszero)\prod_{j\in u}2\cos(\pi k_jx_j)\cos(\pi k_jy_j), \]
where the function $r_{\alpha,\bsgamma,s}$ is defined as in \eqref{eq:weight_function}.
The inner product in $\Hcal(K_{\alpha,\bsgamma,s}^{\rcos})$ is given by
\[ \langle f,g\rangle_{K_{\alpha,\bsgamma,s}^{\rcos}} = \sum_{\bsk\in \NN_0^s}\frac{\tilde{f}(\bsk)\tilde{g}(\bsk)}{r_{\alpha,\bsgamma,s}(\bsk)}. \]

Importantly the space $\Hcal(K_{\alpha,\bsgamma,s}^{\rcos})$ contains non-periodic functions. 
For instance, as pointed out in \cite{DNP14}, the non-periodic function
\[ x-\frac{1}{2} = -\frac{4}{\pi^2}\sum_{\substack{k=1\\ k\colon \text{odd}}}^{\infty}\frac{\cos(\pi kx)}{k^2}\]
is included in $\Hcal(K_{\alpha,\gamma,1}^{\rcos})$ for $1/2<\alpha<3/2$ but not in $\Hcal(K_{\alpha,\gamma,1}^{\rkor})$ for any $\alpha>1/2$.
However let us remark again that the smoothness parameter $\alpha$ measures 
the decay rate of the cosine coefficients and not the differentiability of functions at this point.

\subsection{Sum of Korobov and half-period cosine spaces}
The sum of the weighted Korobov space and the weighted half-period cosine space, 
denoted by $\Hcal(K_{\alpha,\bsgamma,s}^{\rkor+\rcos})$, also introduced in \cite{DNP14} is defined as follows.
The reproducing kernel is defined by
\[ K_{\alpha,\bsgamma,s}^{\rkor+\rcos}(\bsx,\bsy) := 1+\sum_{\emptyset \neq u\subseteq 1:s}\gamma_u\prod_{j\in u}\left[-1+K_{\alpha,1,1}^{\rkor+\rcos}(x_j,y_j)\right], \]
where we write
\begin{align*}
 K_{\alpha,1,1}^{\rkor+\rcos}(x,y) & = \frac{K_{\alpha,1,1}^{\rkor}(x,y)+K_{\alpha,1,1}^{\rcos}(x,y)}{2} \\
 & = 1+\sum_{k=1}^{\infty}\frac{\cos(2\pi k(x-y))+\cos(\pi kx)\cos(\pi ky)}{k^{2\alpha}}
\end{align*}
for $x,y\in [0,1]$. According to the result shown in \cite[Part~I, Section~6]{Aro50}, the norm in $\Hcal(K_{\alpha,\bsgamma,s}^{\rkor+\rcos})$ is given by
\[ \| f\|^2_{K_{\alpha,\bsgamma,s}^{\rkor+\rcos}} = 
\min_{f=f_{\rkor}+f_{\rcos}}2^s\left( \| f_{\rkor}\|^2_{K_{\alpha,\bsgamma,s}^{\rkor}}+\| f_{\rcos}\|^2_{K_{\alpha,\bsgamma,s}^{\rcos}}\right), \]
where the minimum is taken over all functions $f_{\rkor}\in \Hcal(K_{\alpha,\bsgamma,s}^{\rkor})$ and $f_{\rcos}\in \Hcal(K_{\alpha,\bsgamma,s}^{\rcos})$ such that $f=f_{\rkor}+f_{\rcos}$.

\subsection{Non-periodic Sobolev spaces}
Let $\alpha$ be a positive integer.
The reproducing kernel of the weighted non-periodic Sobolev space $\Hcal(K_{\alpha,\bsgamma,s}^{\rsob})$ 
of $\alpha$-th order smoothness is given by
\[ K_{\alpha,\bsgamma,s}^{\rsob}(\bsx,\bsy) := 1+\sum_{\emptyset \neq u\subseteq 1:s}\gamma_u\prod_{j\in u}\left[ -1+K_{\alpha,1,1}^{\rsob}(x_j,y_j)\right], \]
where we define
\[ K_{\alpha,1,1}^{\rsob}(x,y) := 1+\sum_{\tau=1}^{\alpha}\frac{B_{\tau}(x)B_{\tau}(y)}{(\tau!)^2}+(-1)^{\alpha+1}\frac{B_{2\alpha}(|x-y|)}{(2\alpha)!}.\]
Here $B_{\tau}$ denotes the Bernoulli polynomial of degree $\tau$.
The inner product of $\Hcal(K_{\alpha,\bsgamma,s}^{\rsob})$ is given by
\begin{align*}
\langle f,g\rangle_{K_{\alpha,\bsgamma,s}^{\rsob}} 
& = 1+\sum_{\emptyset \neq u\subseteq 1:s}\gamma_u^{-1}\sum_{v\subseteq u}\sum_{\bstau_{u\setminus v}\in \{1,\ldots,\alpha-1\}^{|u\setminus v|}} \\
& \quad \int_{[0,1]^{|v|}}\left( \int_{[0,1]^{s-|v|}}f^{(\bstau_{u\setminus v},\bsalpha_v,\bszero)}(\bsx)\rd \bsx_{1:s\setminus v} \right) \\
& \qquad \qquad \times \left( \int_{[0,1]^{s-|v|}}g^{(\bstau_{u\setminus v},\bsalpha_v,\bszero)}(\bsx)\rd \bsx_{1:s\setminus v} \right)\rd \bsx_v
\end{align*}
where $(\bstau_{u\setminus v},\bsalpha_v,\bszero)$ denotes the $s$-dimensional vector whose $j$-th component is $\tau_j$ if $j\in u\setminus v$, $\alpha$ if $j\in v$ and $0$ otherwise, and then $f^{(\bstau_{u\setminus v},\bsalpha_v,\bszero)}$ denotes the $(\bstau_{u\setminus v},\bsalpha_v,\bszero)$-th partial mixed derivative of $f$.

In the univariate case, the corresponding squared norm is given by
\[ \|f\|_{K_{\alpha,\gamma,1}^{\rsob}}^2 = \left( \int_{0}^{1}f(x)\rd x\right)^2+\frac{1}{\gamma}\left[\sum_{\tau=1}^{\alpha-1}\left( \int_{0}^{1}f^{(\tau)}(x)\rd x\right)^2 + \int_{0}^{1}( f^{(\alpha)}(x))^2 \rd x\right] ,\]
and $\Hcal(K_{\alpha,\gamma,1}^{\rsob})$ is equivalent to
\begin{align*}
\Big\{ f\colon [0,1]\to \RR\, \big|\, & f^{(\tau)}\; \text{absolutely continuous} \\
& \quad \text{for $\tau=0,\ldots,\alpha-1$}, \quad f^{(\alpha)}\in L^2[0,1]\Big\}.
\end{align*}

\subsection{Subspaces of non-periodic Sobolev spaces}
We also consider the following three subspaces of $\Hcal(K_{\alpha,\bsgamma,s}^{\rsob})$ in this paper.
For $s=1$, a weight $\gamma > 0$ and $\alpha \in \NN$, define
\begin{align*}
& \Hcal(K_{\alpha,\gamma,1}^{\rsobodd}) := 
\{f \in \Hcal(K_{\alpha,\gamma,1}^{\rsob})
\mid f^{(\tau)}(0) = f^{(\tau)}(1) \text{ for all odd $\tau \leq \alpha-1$}\},\\
&\Hcal(K_{\alpha,\gamma,1}^{\rsobodda}) := 
\{f \in \Hcal(K_{\alpha,\gamma,1}^{\rsob})
\mid f^{(\tau)}(0) = f^{(\tau)}(1) \text{ for all odd $\tau \leq \alpha-2$}\},\\
&\Hcal(K_{\alpha,\gamma,1}^{\rsoboddbdry}) := \\
& \qquad \{f \in \Hcal(K_{\alpha,\gamma,1}^{\rsob}) \mid f^{(\tau)}(0) = f^{(\tau)}(1) = 0 \text{ for all odd $\tau \leq \alpha-1$}\}.
\end{align*}
These spaces are RKHSs equipped with the inner products 
inherited from $\Hcal(K_{\alpha,\gamma,1}^{\rsob})$
since they are its closed subspaces.
For instance, the reproducing kernels of the first and second spaces are given by
\[
K_{\alpha,\gamma,1}^{\rsobodd}(x,y) := 1+ \gamma\sum_{\substack{\tau=1 \\ \tau\colon \odd}}^{\alpha}\frac{B_{\tau}(x)B_{\tau}(y)}{(\tau!)^2}+(-1)^{\alpha+1}\gamma\frac{B_{2\alpha}(|x-y|)}{(2\alpha)!}
\]
and
\[
K_{\alpha,\gamma,1}^{\rsobodda}(x,y) := 1+ \gamma\sum_{\substack{\tau=1 \\ \tau\colon \odd\ \text{or}\ \tau=\alpha}}^{\alpha}\frac{B_{\tau}(x)B_{\tau}(y)}{(\tau!)^2}+(-1)^{\alpha+1}\gamma\frac{B_{2\alpha}(|x-y|)}{(2\alpha)!},
\]
respectively. We do not give an explicit formula for the kernel of the third one.

For an arbitrary dimension $s$, a set of the weights 
$\bsgamma=(\gamma_u)_{u\subseteq 1:s}$, $\gamma_u\geq 0$, and $\alpha\in \NN$,
we define the respective reproducing kernel by
\[
K_{\alpha,\bsgamma,s}^{\rsob(\bullet)}(\bsx,\bsy)
:= 1+\sum_{\emptyset \neq u\subseteq 1:s}\gamma_u\prod_{j\in u}\left[ -1+K_{\alpha,1,1}^{\rsob(\bullet)}(x_j,y_j)\right],
\]
for $\bullet\in \{\mathrm{odd},\mathrm{odd}+\alpha, \mathrm{odd-bdry}0 \}$.
The following obviously holds:
\[  \Hcal(K_{1,\bsgamma,s}^{\rsoboddbdry}) = \Hcal(K_{1,\bsgamma,s}^{\rsobodd}) = \Hcal(K_{1,\bsgamma,s}^{\rsobodda}) = \Hcal(K_{1,\bsgamma,s}^{\rsob}), \]
\[ \Hcal(K_{\alpha,\bsgamma,s}^{\rsoboddbdry}) \subset \Hcal(K_{\alpha,\bsgamma,s}^{\rsobodd}) =\Hcal(K_{\alpha,\bsgamma,s}^{\rsobodda}) \subset \Hcal(K_{\alpha,\bsgamma,s}^{\rsob}) \]
for odd $\alpha>1$, and
\[ \Hcal(K_{\alpha,\bsgamma,s}^{\rsoboddbdry}) \subset \Hcal(K_{\alpha,\bsgamma,s}^{\rsobodd}) \subset \Hcal(K_{\alpha,\bsgamma,s}^{\rsobodda}) \subset \Hcal(K_{\alpha,\bsgamma,s}^{\rsob}) \]
for even $\alpha$, where we have an exception when $\alpha=2$ for which $\Hcal(K_{2,\bsgamma,s}^{\rsobodda}) = \Hcal(K_{2,\bsgamma,s}^{\rsob})$ holds.
\section{Embeddings and norm equivalences}\label{sec:embed}
Let $\Hcal(K_1)$ and $\Hcal(K_2)$ be RKHSs.
We say that $\Hcal(K_1)$ is continuously embedded in $\Hcal(K_2)$,
which is denoted by
\[ \Hcal(K_1) \hookrightarrow \Hcal(K_2), \]
if $\Hcal(K_1) \subseteq \Hcal(K_2)$ holds as sets and there exists a constant $C>0$ such that
\[ \|f\|_{K_2} \leq C \|f\|_{K_1} \qquad \text{for all $f \in \Hcal(K_1)$}.\]
If $\Hcal(K_1) \hookrightarrow \Hcal(K_2)$ and $\Hcal(K_2) \hookrightarrow \Hcal(K_1)$ hold,
then we say that  $\Hcal(K_1)$ and $\Hcal(K_2)$ are norm equivalent
and write
\[ \Hcal(K_1) \leftrightharpoons \Hcal(K_2). \]
If there exists a function $f$ such that $f \in \Hcal(K_1)$ and $f\notin \Hcal(K_2)$,
we write 
\[ \Hcal(K_1) \not\subset \Hcal(K_2). \]

\subsection{$\Hcal(K_{\alpha,\bsgamma,s}^{\rcos})$ and $\Hcal(K_{\alpha,\bsgamma,s}^{\rsoboddbdry})$}
This subsection is devoted to prove:
\begin{lemma}\label{lem:equiv-cosspace}
For $\alpha\in \NN$, we have
\begin{align*}
\Hcal(K_{\alpha,\bsgamma,s}^{\rcos})
\leftrightharpoons
\Hcal(K_{\alpha,\bsgamma,s}^{\rsoboddbdry}).
\end{align*}
\end{lemma}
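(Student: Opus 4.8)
The plan is to reduce everything to the univariate, unit-weight case and then tensorize. Write $\kappa^{\rcos}(x,y) := K_{\alpha,1,1}^{\rcos}(x,y)-1$ and $\kappa^{\rsoboddbdry}(x,y) := K_{\alpha,1,1}^{\rsoboddbdry}(x,y)-1$ for the reduced univariate kernels (the reproducing kernels of the orthogonal complements of the constants); both multivariate kernels then have the common shape $1+\sum_{\emptyset\neq u}\gamma_u\prod_{j\in u}\kappa^{\bullet}(x_j,y_j)$. By Aronszajn's inclusion theorem, the univariate equivalence $\Hcal(\kappa^{\rcos})\leftrightharpoons\Hcal(\kappa^{\rsoboddbdry})$ with constants $0<c\le C$ is the same as the kernel sandwich $c\,\kappa^{\rsoboddbdry}\preceq\kappa^{\rcos}\preceq C\,\kappa^{\rsoboddbdry}$ in the order of positive semi-definite kernels. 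Since the tensor product of positive semi-definite kernels is positive semi-definite and, by a telescoping argument, this order is preserved under tensoring, one gets $c^{|u|}\prod_{j\in u}\kappa^{\rsoboddbdry}\preceq\prod_{j\in u}\kappa^{\rcos}\preceq C^{|u|}\prod_{j\in u}\kappa^{\rsoboddbdry}$; summing against the nonnegative weights $\gamma_u$ and restoring the common term $1$ yields the multivariate sandwich with constants $\min(1,c^s)$ and $\max(1,C^s)$, hence the claimed equivalence for arbitrary $s$ and $\bsgamma$. Thus it suffices to treat $s=1$, $\gamma=1$, and this is where the real work lies.

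For the univariate statement I would argue as follows. First, every cosine basis function $\phi_k(x)=\sqrt{2-\delta_{0,k}}\cos(\pi k x)$ lies in the Sobolev subspace, because its odd-order derivatives are proportional to $\sin(\pi k x)$ and hence vanish at $x\in\{0,1\}$. Next, for $f$ in the subspace I would integrate $\tilde f(k)=\sqrt2\int_0^1 f(x)\cos(\pi k x)\rd x$ by parts $\alpha$ times: at each odd step the boundary contribution involves $f^{(\tau)}(0),f^{(\tau)}(1)$ with $\tau$ odd, which all vanish by the defining boundary conditions, while at each even step the boundary term carries a factor $\sin(\pi k x)$ and vanishes automatically. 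Only the final integral survives, giving $\tilde f(k)=\pm(\pi k)^{-\alpha}c_k$, where $c_k$ is the $k$-th cosine coefficient of $f^{(\alpha)}$ when $\alpha$ is even and its $k$-th sine coefficient when $\alpha$ is odd. Applying Parseval for the cosine system (even $\alpha$, after noting that the $k=0$ term $\int_0^1 f^{(\alpha)}=f^{(\alpha-1)}(1)-f^{(\alpha-1)}(0)$ vanishes by the boundary condition) or for the sine system (odd $\alpha$) then yields the clean identity
\[
\sum_{k\ge 1}k^{2\alpha}|\tilde f(k)|^2=\pi^{-2\alpha}\,\|f^{(\alpha)}\|_{L^2[0,1]}^2 .
\]

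Comparing with the univariate Sobolev norm, the constant parts $(\int_0^1 f)^2=|\tilde f(0)|^2$ coincide and the displayed identity matches the top-order term $\|f^{(\alpha)}\|_{L^2}^2$ up to the factor $\pi^{2\alpha}$. The only discrepancy is the intermediate Sobolev terms $(\int_0^1 f^{(\tau)})^2=(f^{(\tau-1)}(1)-f^{(\tau-1)}(0))^2$; under the boundary conditions these vanish for even $\tau$ and survive only for odd $\tau$, i.e. for even differentiation orders $\tau-1\le\alpha-2$. Expanding such a boundary difference through the cosine series gives $f^{(\tau-1)}(1)-f^{(\tau-1)}(0)\propto\sum_{k\ \text{odd}}k^{\tau-1}\tilde f(k)$, and Cauchy--Schwarz bounds its square by $(\sum_{k\ge1}k^{2(\tau-1-\alpha)})\sum_{k\ge1}k^{2\alpha}|\tilde f(k)|^2$; since $\tau-1\le\alpha-2$ the prefactor series converges, so each intermediate term is dominated by the cosine semi-norm. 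As there are only finitely many such (nonnegative) terms, the Sobolev semi-norm and the cosine semi-norm $\sum_{k\ge1}k^{2\alpha}|\tilde f(k)|^2$ are equivalent, with constants depending only on $\alpha$.

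Finally I would check set equality. For $f$ in the Sobolev subspace the identity above shows its cosine norm is finite, so $f\in\Hcal(K_{\alpha,\gamma,1}^{\rcos})$; conversely, finiteness of the cosine norm gives $\sum_k k^{2\alpha}|\tilde f(k)|^2<\infty$, whence Cauchy--Schwarz yields $\sum_k k^{\sigma}|\tilde f(k)|<\infty$ for every $\sigma\le\alpha-1$, so the differentiated cosine series converge uniformly up to order $\alpha-1$, $f^{(\alpha)}\in L^2$, and each $f^{(\sigma)}$ inherits the vanishing odd-order boundary values of the $\phi_k$, placing $f$ in the subspace. The main obstacle I anticipate is purely bookkeeping: tracking which boundary terms survive the iterated integration by parts, handling the parity split between the cosine and sine Parseval identities, and verifying that the intermediate boundary differences are genuinely controlled by the top-order cosine semi-norm rather than being free degrees of freedom. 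The convergence of $\sum_{k\ \text{odd}}k^{2(\tau-1-\alpha)}$ is exactly what forces $\tau-1\le\alpha-2$ and is the crux of why an \emph{equivalence}, and not merely a one-sided inclusion, holds.
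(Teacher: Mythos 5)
Your proposal is correct, and its core is the same as the paper's: integration by parts $\alpha$ times using the odd-derivative boundary conditions to express $\tilde{f}(k)$ through $f^{(\alpha)}$ paired with the sine/cosine system according to the parity of $\alpha$, and, for the converse inclusion, termwise differentiation of the cosine series with a Cauchy--Schwarz/zeta-function bound (convergent precisely for derivative order up to $\alpha-1$) to recover the vanishing odd-order boundary values. Where you genuinely depart from the paper is in making self-contained two steps the paper delegates. First, the paper simply asserts that it suffices to treat $s=1$, whereas you justify the tensorization explicitly via Aronszajn's inclusion theorem: the univariate two-sided kernel domination for the reduced kernels $\kappa^{\bullet}=K^{\bullet}_{\alpha,1,1}-1$ propagates through products (telescoping, since a tensor product of positive semi-definite kernels is positive semi-definite) and then through the weighted sum over $u$, at the cost of constants like $\min(1,c^s)$ and $\max(1,C^s)$ --- harmless for fixed $s$. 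Second, the paper invokes \cite[Theorem~1]{DNP14} for the embedding $\Hcal(K_{\alpha,\gamma,1}^{\rcos})\hookrightarrow\Hcal(K_{\alpha,\gamma,1}^{\rsob})$, whereas you reprove the needed norm comparison directly: your Parseval identity $\sum_{k\ge1}k^{2\alpha}|\tilde f(k)|^2=\pi^{-2\alpha}\|f^{(\alpha)}\|_{L^2}^2$ (an upgrade of the paper's one-sided Bessel inequality, valid because the $k=0$ sine/cosine term vanishes under the boundary conditions) handles the top-order term, and your Cauchy--Schwarz bound on $\sum_{k\,\mathrm{odd}}k^{\tau-1}\tilde f(k)$ controls the surviving intermediate Sobolev terms $(\int_0^1 f^{(\tau)})^2$, which exist only for odd $\tau\le\alpha-1$. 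This also lets you treat $\alpha=1$ uniformly, while the paper special-cases it via \cite{DNP14}. Two presentational cautions: your step-parity bookkeeping in the integration by parts is off by one (the boundary conditions are consumed at the steps where the antiderivative is a cosine, i.e.\ where odd derivatives of $f$ appear --- the conclusion is unaffected), and the logical order should be arranged so that the set inclusion $\Hcal(K_{\alpha,\gamma,1}^{\rcos})\subseteq\Hcal(K_{\alpha,\gamma,1}^{\rsoboddbdry})$ from your final paragraph is established \emph{before} the Parseval identity is applied to a general element of the cosine space, since the identity is derived under the boundary conditions. Neither point is a gap.
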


\begin{proof}
It suffices to prove the result for the case $s=1$ and a weight $\gamma>0$.
In the case $\alpha=1$, the result immediately follows from 
the equality $\Hcal(K_{1,\gamma,1}^{\rsoboddbdry}) = \Hcal(K_{1,\gamma,1}^{\rsob})$
and the norm equivalence 
$\Hcal(K_{1,\gamma,1}^{\rcos}) \leftrightharpoons \Hcal(K_{1,\gamma,1}^{\rsob})$
shown in \cite{DNP14}.
Thus let us consider the case $\alpha>1$ in the following.

First we prove $\Hcal(K_{\alpha,\gamma,1}^{\rcos}) \hookrightarrow \Hcal(K_{\alpha,\gamma,1}^{\rsoboddbdry})$.
Let
\[ f(x) = a_0+\sum_{k \in \NN} a_k \sqrt{2}\cos(\pi k x) \]
for a real sequence $(a_k)_{k\in \NN_0}$ such that $f \in \Hcal(K_{\alpha,\gamma,1}^{\rcos})$, 
that is, $(a_k)_{k\in \NN_0}$ satisfies
\[ \| f\|^2_{K_{\alpha,\gamma,1}^{\rcos}} = \langle f,f\rangle_{K_{\alpha,\gamma,1}^{\rcos}} = |a_0|^2+\frac{1}{\gamma}\sum_{k\in \NN}|a_k|^2k^{2\alpha}<\infty. \]
As shown in \cite[Section~3.1]{DNP14}, for an odd integer $\tau$ with $1 \leq \tau \leq \alpha-1$, we have
\[ f^{(\tau)}(x) = \sum_{k \in \NN} (-1)^{(\tau+1)/2} a_k \sqrt{2}(\pi k)^\tau \sin(\pi k x), \]
which is pointwise absolutely convergent since
\begin{align*}
& \sum_{k \in \NN} \left|(-1)^{(\tau+1)/2} a_k \sqrt{2}(\pi k)^\tau \sin(\pi k x)\right|  \\
& \leq \pi^\tau \sqrt{2}\sum_{k \in \NN} \left|a_k\right| k^\tau \leq \pi^{\tau}\sqrt{2}\left( \sum_{k \in \NN} \left|a_k\right|^2 k^{2\alpha} \right)^{1/2}\left(\sum_{k \in \NN} \frac{1}{k^{2\alpha-2\tau}} \right)^{1/2} \\
& \leq \pi^\tau \sqrt{2\gamma\zeta(2\alpha-2\tau)}\| f\|_{K_{\alpha,\gamma,1}^{\rcos}} < \infty,
\end{align*}
where we used Cauchy--Schwarz inequality in the second inequality and $\zeta(\cdot)$ denotes the zeta function.
Hence it follows that $f^{(\tau)}(0) = f^{(\tau)}(1) = 0$.
Further it is known from \cite[Theorem~1]{DNP14} that
$\Hcal(K_{\alpha,\gamma,1}^{\rcos}) \hookrightarrow \Hcal(K_{\alpha,\gamma,1}^{\rsob})$.
Thus we conclude
$\Hcal(K_{\alpha,\gamma,1}^{\rcos}) \hookrightarrow \Hcal(K_{\alpha,\gamma,1}^{\rsoboddbdry})$.

Next we prove $\Hcal(K_{\alpha,\gamma,1}^{\rsoboddbdry}) \hookrightarrow \Hcal(K_{\alpha,\gamma,1}^{\rcos})$.
Let $f \in \Hcal(K_{\alpha,\bsgamma,1}^{\rsoboddbdry})$.
We note that such a smooth function can be always represented by a cosine series.
For any integer $k \geq 1$, applying integration by parts $\alpha$ times while using the boundary condition
 $f^{(\tau)}(0) = f^{(\tau)}(1) = 0$ for odd integers $\tau$ with $1\leq \tau \leq \alpha-1$ gives the $k$-th cosine coefficient
\[
\tilde{f}(k)  = \sqrt{2} \int_0^1 f(x) \cos(\pi k x) \rd x
= \frac{\sqrt{2}}{(-\pi k)^{\alpha}} \int_0^1 f^{(\alpha)}(x) \, \mathrm{cs}_{\alpha}(\pi k x) \rd x,
\]
where we define
\[
\mathrm{cs}_{\alpha}(x) :=
\begin{cases}
(-1)^{(\alpha-1)/2}\sin(x) & \text{for odd $\alpha$},\\
(-1)^{\alpha/2}\cos(x) & \text{for even $\alpha$}.
\end{cases}
\]
Hence
\begin{align*}
\|f\|_{K_{\alpha,\gamma,1}^{\rcos}}^2
&= (\tilde{f}(0))^2 + \frac{1}{\gamma}\sum_{k=1}^\infty k^{2\alpha}(\tilde{f}(k))^2\\
&= \left(\int_0^1 f(x) \rd x\right)^2
 + \frac{1}{\gamma \pi^{2\alpha}}\sum_{k=1}^\infty \left(\sqrt{2} \int_0^1 f^{(\alpha)}(x) \, \mathrm{cs}_{\alpha}(\pi k x) \rd x \right)^2\\
&\leq \left(\int_0^1 f(x) \rd x\right)^2 +\frac{1}{\gamma \pi^{2\alpha}}\int_0^1 (f^{(\alpha)}(x))^2 \rd x \leq (1+\pi^{-2\alpha}) \|f\|_{K_{\alpha,\gamma,1}^{\rsob}}^2,
\end{align*}
where the first inequality follows from Bessel's inequality.
Thus we have shown that
$\Hcal(K_{\alpha,\gamma,1}^{\rsoboddbdry}) \hookrightarrow \Hcal(K_{\alpha,\gamma,1}^{\rcos})$.\smartqed
\end{proof}

\subsection{$\Hcal(K_{\alpha,\bsgamma,s}^{\rkor+\rcos})$ and $\Hcal(K_{\alpha,\bsgamma,s}^{\rsobodd})$}
This subsection is devoted to prove:
\begin{lemma}\label{lem:equiv-korcosspace}
For $\alpha\in \NN$, we have
\begin{align*}
\Hcal(K_{\alpha,\bsgamma,s}^{\rkor+\rcos})
\leftrightharpoons
\Hcal(K_{\alpha,\bsgamma,s}^{\rsobodd}).
\end{align*}
\end{lemma}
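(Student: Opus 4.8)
The plan is to reduce, as in the proof of Lemma~\ref{lem:equiv-cosspace}, to the univariate case $s=1$ with a single weight $\gamma>0$, since the tensor-product structure of both kernels propagates a one-dimensional norm equivalence to arbitrary $s$ (the constants multiply across the product over $j\in u$, and the finitely many subsets $u$ contribute a bounded factor). The case $\alpha=1$ is again immediate from $\Hcal(K_{1,\gamma,1}^{\rsobodd})=\Hcal(K_{1,\gamma,1}^{\rsob})\leftrightharpoons\Hcal(K_{1,\gamma,1}^{\rcos})=\Hcal(K_{1,\gamma,1}^{\rkor+\rcos})$, so I would focus on $\alpha>1$.

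The key structural idea is that both spaces decompose as an internal direct sum. On the cosine/Korobov side, $\Hcal(K_{\alpha,\gamma,1}^{\rkor+\rcos})=\Hcal(K_{\alpha,\gamma,1}^{\rkor})+\Hcal(K_{\alpha,\gamma,1}^{\rcos})$ by definition, with the $2^s$-weighted minimum-norm characterization from Aronszajn's sum theorem. I would look for the parallel decomposition of $\Hcal(K_{\alpha,\gamma,1}^{\rsobodd})$ into a ``periodic part'' and a ``boundary part.'' Concretely, a function $f$ in $\Hcal(K_{\alpha,\gamma,1}^{\rsobodd})$ satisfies $f^{(\tau)}(0)=f^{(\tau)}(1)$ for all odd $\tau\le\alpha-1$; the even-order derivatives may fail to be periodic. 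The natural split is $f=f_{\mathrm{per}}+f_{\mathrm{bdry}}$, where $f_{\mathrm{bdry}}$ is a suitable polynomial correction (built from the Bernoulli polynomials $B_\tau$ with even $\tau\le\alpha-1$ appearing in the kernel $K_{\alpha,1,1}^{\rsob}$) chosen so that $f_{\mathrm{per}}$ has all derivatives up to order $\alpha-1$ matching at the endpoints, hence lies in (a space norm-equivalent to) the Korobov space $\Hcal(K_{\alpha,\gamma,1}^{\rkor})$, while $f_{\mathrm{bdry}}$ carries exactly the even boundary data and lies in the half-period cosine space $\Hcal(K_{\alpha,\gamma,1}^{\rcos})\leftrightharpoons\Hcal(K_{\alpha,\gamma,1}^{\rsoboddbdry})$ by the already-proved Lemma~\ref{lem:equiv-cosspace}. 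Matching the kernel $K_{\alpha,1,1}^{\rsobodd}$ term by term against $K_{\alpha,1,1}^{\rkor}+K_{\alpha,1,1}^{\rsoboddbdry}$ (up to the constant-function overlap and a multiplicative constant) should confirm that these are the right pieces, since the odd-$\tau$ Bernoulli terms are precisely those present in $K_{\alpha,1,1}^{\rsoboddbdry}$ and the periodic part $(-1)^{\alpha+1}B_{2\alpha}(|x-y|)/(2\alpha)!$ is precisely the Korobov kernel.

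With the decomposition in hand, both embeddings follow from norm comparisons. For $\Hcal(K_{\alpha,\gamma,1}^{\rsobodd})\hookrightarrow\Hcal(K_{\alpha,\gamma,1}^{\rkor+\rcos})$, I would bound the minimum-norm expression by the cost of the specific splitting $f=f_{\mathrm{per}}+f_{\mathrm{bdry}}$ above, controlling $\|f_{\mathrm{per}}\|_{K_{\alpha,\gamma,1}^{\rkor}}$ and $\|f_{\mathrm{bdry}}\|_{K_{\alpha,\gamma,1}^{\rcos}}$ each by a constant times $\|f\|_{K_{\alpha,\gamma,1}^{\rsob}}$ using the univariate norm formulas for the Sobolev and Korobov spaces and Lemma~\ref{lem:equiv-cosspace}. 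For the reverse embedding, I would take an arbitrary admissible splitting $f=f_{\rkor}+f_{\rcos}$, verify that $f$ then automatically satisfies the odd-order periodicity conditions (since both summands do, the Korobov part trivially and the cosine part by the sine-series argument of Lemma~\ref{lem:equiv-cosspace}), and bound $\|f\|_{K_{\alpha,\gamma,1}^{\rsob}}$ by the sum of the two norms, taking the infimum over splittings to recover the $\rkor+\rcos$ norm. The main obstacle I anticipate is handling the even-order boundary terms cleanly: the Korobov norm formula only sees $\int_0^1(f^{(\alpha)})^2$ while the Sobolev norm additionally penalizes $\bigl(\int_0^1 f^{(\tau)}\bigr)^2$ for $\tau<\alpha$, so I must ensure the polynomial correction $f_{\mathrm{bdry}}$ exactly absorbs the even boundary data without inflating the $\alpha$-th derivative norm uncontrollably, and check that the finite-dimensional space of such corrections yields bounds independent of $f$. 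This is a linear-algebra estimate on a fixed finite set of Bernoulli polynomials, so the constants are absolute once $\alpha$ is fixed, and only the bookkeeping is delicate.
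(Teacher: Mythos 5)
Your reverse embedding $\Hcal(K_{\alpha,\gamma,1}^{\rkor+\rcos}) \hookrightarrow \Hcal(K_{\alpha,\gamma,1}^{\rsobodd})$ matches what the paper does (it omits this direction as analogous to Lemma~\ref{lem:equiv-cosspace}), but there is a genuine gap in your forward direction $\Hcal(K_{\alpha,\gamma,1}^{\rsobodd}) \hookrightarrow \Hcal(K_{\alpha,\gamma,1}^{\rkor+\rcos})$: the proposed splitting $f=f_{\mathrm{per}}+f_{\mathrm{bdry}}$ with a Bernoulli-polynomial boundary correction cannot work as stated. First, a parity slip: the even-order derivative jumps of $f$ are absorbed by \emph{odd}-degree Bernoulli polynomials (since $b_{2l+1}^{(2l)}=b_1$ has jump $1$ across the endpoints), not by even-degree ones; indeed $B_{2k}$ with $2k\le\alpha$ satisfies $B_{2k}^{(2k-1)}\propto B_1$, fails odd-order periodicity, and so lies \emph{outside} $\Hcal(K_{\alpha,\gamma,1}^{\rsobodd})$ altogether. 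Second, and fatally, the corrected polynomial does not lie in the cosine space: odd-degree Bernoulli polynomials have odd-order endpoint derivatives equal to nonzero Bernoulli numbers --- e.g.\ $B_3'(0)=B_3'(1)=1/2\neq 0$ --- so they violate the boundary conditions defining $\Hcal(K_{\alpha,\gamma,1}^{\rsoboddbdry})$, and hence by Lemma~\ref{lem:equiv-cosspace} they are not in $\Hcal(K_{\alpha,\gamma,1}^{\rcos})$ for $\alpha\geq 2$. This is a membership failure, not the norm-inflation issue you anticipated, so no finite-dimensional estimate on constants can repair it. Relatedly, your kernel-matching heuristic (``the odd-$\tau$ Bernoulli terms are precisely those present in $K_{\alpha,1,1}^{\rsoboddbdry}$'') is false: the sections of a reproducing kernel must belong to its space, so $K_{\alpha,1,1}^{\rsoboddbdry}$ cannot contain the products $B_\tau(x)B_\tau(y)$ for odd $\tau$ as standalone summands (the paper pointedly declines to write this kernel down).

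The paper's proof circumvents exactly this obstruction. It writes $K_{\alpha,\gamma,1}^{\rsobodd} = 1+\gamma K_1 + \gamma K_2$, where $K_1$ is the odd-degree Bernoulli part and $1+\gamma K_2$ is the Korobov kernel $K_{\alpha,\gamma(2\pi)^{-2\alpha},1}^{\rkor}$, invokes Aronszajn's sum theorem, and then observes that $\Hcal(K_1)$ is \emph{finite dimensional}, so a set inclusion $\Hcal(K_1)\subset\Hcal(K_{\alpha,\gamma,1}^{\rkor+\rcos})$ suffices --- continuity is automatic, which also disposes of your worry about uniform constants. The inclusion itself requires the further split your proposal is missing: for $f(x)=\sum_{k=0}^q a_k B_{2k+1}(x)/(2k+1)!$ one takes $g_2$ to be a \emph{finite cosine sum at odd frequencies}, $g_2(x)=-\sum_{k=0}^q (c_k/2)\cos(\pi(2k+1)x)$, with the $c_k$ solving a nonsingular Vandermonde system so that the even-order derivative jumps of $g_2$ match those of $f$; then $g_1=f-g_2$ is $C^\infty$ with all derivatives periodic, hence in the Korobov space, while $g_2$ is trivially in the cosine space. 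In short, the boundary correction must be trigonometric (cosines have all odd-order derivatives vanishing at the endpoints), and part of each Bernoulli polynomial must be pushed into the \emph{Korobov} component --- the sum-space structure grants this freedom, which your rigid periodic/boundary dichotomy forgoes.
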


\begin{proof}
It suffices to prove the result for the case $s=1$ and a weight $\gamma>0$.
Since the embedding
$\Hcal(K_{\alpha,\gamma,1}^{\rkor+\rcos}) \hookrightarrow \Hcal(K_{\alpha,\gamma,1}^{\rsobodd})$
can be shown in a way similar to the first part of the proof of Lemma~\ref{lem:equiv-cosspace}, we omit the proof.

We prove the converse embedding $\Hcal(K_{\alpha,\gamma,1}^{\rsobodd})\hookrightarrow \Hcal(K_{\alpha,\gamma,1}^{\rkor+\rcos}) $.
Recall
\begin{align*}
K_{\alpha,\gamma,1}^{\rsobodd}(x,y)
&= 1 +\gamma\sum_{\substack{\tau=1 \\ \tau\colon \odd}}^{\alpha}\frac{B_{\tau}(x)B_{\tau}(y)}{(\tau!)^2}
 +(-1)^{\alpha+1}\gamma\frac{B_{2\alpha}(|x-y|)}{(2\alpha)!}\\
&=: 1 + \gamma K_1(x,y) + \gamma K_2(x,y).
\end{align*}
This means that $\Hcal(K_{\alpha,\gamma ,1}^{\rsobodd})$ is given by the sum of $\Hcal(\gamma K_1)$ and $\Hcal(1+\gamma K_2)$.
The latter space is nothing but the Korobov space $\Hcal(K_{\alpha,\gamma(2\pi)^{-2\alpha},1}^{\rkor})$,
see Section~\ref{subsec:Korobov},
and it follows from \cite[Theorem~1]{DNP14} that $\Hcal(1+\gamma K_2)$ is 
continuously embedded into $\Hcal(K_{\alpha,\gamma,1}^{\rkor+\cos})$.
Thus, in order to prove the desired embedding,
it suffices to show
$\Hcal(K_1) \hookrightarrow \Hcal(K_{\alpha,\gamma,1}^{\rkor+\cos})$.

Following \cite[Part~I, Section~3]{Aro50}, $\Hcal(K_1)$ is a RKHS spanned by
\[
\{B_{\tau}(x) \mid \text{$\tau$ is an odd integer with $1 \leq \tau \leq \alpha$}\}
\]
and in particular is finite dimensional.
Thus we only need to show $\Hcal(K_1) \subset \Hcal(K_{\alpha,\gamma,1}^{\rkor+\cos})$ as sets.
Actually we can prove a bit stronger claim that any finite summation of Bernoulli polynomials of odd degrees
is in $\Hcal(K_{\alpha,\gamma,1}^{\rkor+\cos})$.
Let
\[
f(x) = \sum_{k=0}^q a_k \frac{B_{2k+1}(x)}{(2k+1)!} ,
\]
where $q \geq \alpha/2$ is a positive integer and $a_k \in \RR$.
It suffices to show that $f$ is decomposable as $f(x) = g_1(x) + g_2(x)$
with $g_1 \in \Hcal(K_{\alpha,\gamma,1}^{\rkor})$
and $g_2 \in \Hcal(K_{\alpha,\gamma,1}^{\rcos})$.
Let $V := ((-\pi^2(2k+1)^2)^l)_{k,l=0}^q$ be a non-singular Vandermonde matrix.
Define $g_2 \in \Hcal(K_{\alpha,\gamma,1}^{\rcos})$ by
\[
g_2(x) := -\sum_{k=0}^q \frac{c_k}{2} \cos(\pi (2k+1) x),
\]
where $c_k \in \RR$ are determined as the unique solution of the equation
\[
V (c_0, \dots, c_q)^{\top} = (a_0, \dots, a_q)^{\top}.
\]
Now let $g_1 := f-g_2$.
Then it is straightforward to see that $g_1^{(\tau)}(0) = g_1^{(\tau)}(1)$ holds for all integers $0 \leq \tau \leq 2q+1$, 
and that $g_1$ is infinitely differentiable,
which means $g_1\in \Hcal(K_{\alpha,\gamma,1}^{\rkor})$.
Thus a desired decomposition $f = g_1 + g_2$ holds.\smartqed
\end{proof}

\subsection{Summary of embedding results}
Together with \cite[Theorem~1]{DNP14}, here we give a summary on embeddings and norm equivalences 
of the function spaces introduced in the previous section.
\begin{theorem}\label{thm:embedding}
Let $\alpha\in \NN$. For a dimension $s$ and a set of weights $\bsgamma=(\gamma_u)_{u\subseteq 1:s}$, $\gamma_u\geq 0$, the following holds true:
\begin{enumerate}
\item For $\alpha=1$,
\begin{align*}
& \Hcal(K_{1,\bsgamma,s}^{\rcos}) \leftrightharpoons \Hcal(K_{1,\bsgamma,s}^{\rkor+\cos}) \leftrightharpoons \Hcal(K_{1,\bsgamma,s}^{\rsob}) \\
& \quad = \Hcal(K_{1,\bsgamma,s}^{\rsobodd}) = \Hcal(K_{1,\bsgamma,s}^{\rsobodda}) = \Hcal(K_{1,\bsgamma,s}^{\rsoboddbdry}) .
\end{align*}
\item For odd $\alpha>1$,
\begin{align*}
& \Hcal(K_{\alpha,\bsgamma,s}^{\rcos}) \leftrightharpoons \Hcal(K_{\alpha,\bsgamma,s}^{\rsoboddbdry})  \hookrightarrow \Hcal(K_{\alpha,\bsgamma,s}^{\rkor+\cos}) \\
& \quad \leftrightharpoons \Hcal(K_{\alpha,\bsgamma,s}^{\rsobodd}) = \Hcal(K_{\alpha,\bsgamma,s}^{\rsobodda}) \hookrightarrow \Hcal(K_{\alpha,\bsgamma,s}^{\rsob}).
\end{align*}
\item For even $\alpha$,
\begin{align*}
& \Hcal(K_{\alpha,\bsgamma,s}^{\rcos}) \leftrightharpoons \Hcal(K_{\alpha,\bsgamma,s}^{\rsoboddbdry})  \hookrightarrow \Hcal(K_{\alpha,\bsgamma,s}^{\rkor+\cos}) \\
& \quad \leftrightharpoons \Hcal(K_{\alpha,\bsgamma,s}^{\rsobodd}) \hookrightarrow \Hcal(K_{\alpha,\bsgamma,s}^{\rsobodda}) \hookrightarrow \Hcal(K_{\alpha,\bsgamma,s}^{\rsob}),
\end{align*}
wherein $\Hcal(K_{\alpha,\bsgamma,s}^{\rsobodda}) = \Hcal(K_{\alpha,\bsgamma,s}^{\rsob})$ holds when $\alpha=2$.
\end{enumerate}
\end{theorem}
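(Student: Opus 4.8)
The statement is a summary assembled from results already in hand, so the plan is to glue together the two norm equivalences of Lemma~\ref{lem:equiv-cosspace} and Lemma~\ref{lem:equiv-korcosspace}, the set relations among the Sobolev subspaces recorded at the end of Section~\ref{sec:RKHS}, and the embeddings of \cite[Theorem~1]{DNP14}, and then to invoke transitivity. Each link appearing in the three displayed chains is of exactly one of three types --- an equality of Hilbert spaces ($=$), a norm equivalence ($\leftrightharpoons$), or a continuous embedding ($\hookrightarrow$) --- and I would use throughout that $\hookrightarrow$ composed with $\leftrightharpoons$ (in either order) yields $\hookrightarrow$, while $\leftrightharpoons$ composed with $\leftrightharpoons$ yields $\leftrightharpoons$.

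The first point to record is how the set relations of Section~\ref{sec:RKHS} upgrade to metric relations. Each of $\Hcal(K_{\alpha,\bsgamma,s}^{\rsoboddbdry})$, $\Hcal(K_{\alpha,\bsgamma,s}^{\rsobodd})$ and $\Hcal(K_{\alpha,\bsgamma,s}^{\rsobodda})$ sits inside $\Hcal(K_{\alpha,\bsgamma,s}^{\rsob})$ as a closed subspace carrying the inherited inner product, so any set inclusion among them is an \emph{isometric}, hence continuous, embedding, and any set equality is an equality of Hilbert spaces. In fact the equalities $\Hcal(K_{\alpha,\bsgamma,s}^{\rsobodd})=\Hcal(K_{\alpha,\bsgamma,s}^{\rsobodda})$ for odd $\alpha$ and $\Hcal(K_{2,\bsgamma,s}^{\rsobodda})=\Hcal(K_{2,\bsgamma,s}^{\rsob})$ already hold at the level of reproducing kernels, since for odd $\alpha$ the index $\tau=\alpha$ is odd (so the two defining sums coincide) and for $\alpha=2$ the set $\{1,2\}$ equals $\{\text{odd }\tau\le 2\}\cup\{2\}$; passing to a strictly larger subspace only reinstates nonnegative rank-one terms $\gamma B_\tau(x)B_\tau(y)/(\tau!)^2$ in the kernel, making the kernel difference positive semidefinite and yielding the continuous embedding. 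This is the one place where I would spell out, for general $s$, that the weighted product-sum form of the kernels preserves these inclusions.

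With this in hand the three cases reduce to chasing the chains. The only link not directly supplied by a lemma or by Section~\ref{sec:RKHS} is $\Hcal(K_{\alpha,\bsgamma,s}^{\rsoboddbdry})\hookrightarrow\Hcal(K_{\alpha,\bsgamma,s}^{\rkor+\rcos})$, which I would obtain by composing the isometric inclusion $\Hcal(K_{\alpha,\bsgamma,s}^{\rsoboddbdry})\subset\Hcal(K_{\alpha,\bsgamma,s}^{\rsobodd})$ with the norm equivalence $\Hcal(K_{\alpha,\bsgamma,s}^{\rsobodd})\leftrightharpoons\Hcal(K_{\alpha,\bsgamma,s}^{\rkor+\rcos})$ of Lemma~\ref{lem:equiv-korcosspace}. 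For $\alpha=1$ all four Sobolev subspaces coincide as Hilbert spaces, so Lemmas~\ref{lem:equiv-cosspace} and~\ref{lem:equiv-korcosspace} collapse the whole list into one norm-equivalence class. For odd $\alpha>1$ the middle equality $\Hcal(K_{\alpha,\bsgamma,s}^{\rsobodd})=\Hcal(K_{\alpha,\bsgamma,s}^{\rsobodda})$ persists while the flanking inclusions become proper embeddings; for even $\alpha$ the inclusion $\Hcal(K_{\alpha,\bsgamma,s}^{\rsobodd})\subset\Hcal(K_{\alpha,\bsgamma,s}^{\rsobodda})$ separates into its own $\hookrightarrow$, with the equality $\Hcal(K_{2,\bsgamma,s}^{\rsobodda})=\Hcal(K_{2,\bsgamma,s}^{\rsob})$ accounting for the $\alpha=2$ exception. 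Where convenient, \cite[Theorem~1]{DNP14} supplies the direct relations among $\Hcal(K_{\alpha,\bsgamma,s}^{\rcos})$, $\Hcal(K_{\alpha,\bsgamma,s}^{\rkor})$, $\Hcal(K_{\alpha,\bsgamma,s}^{\rkor+\rcos})$ and $\Hcal(K_{\alpha,\bsgamma,s}^{\rsob})$ that also appear in the chains.

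I do not expect a genuine obstacle here: the substance lives in Lemmas~\ref{lem:equiv-cosspace} and~\ref{lem:equiv-korcosspace}, and the theorem is essentially their corollary. The only real care is \emph{bookkeeping} --- deciding, link by link and parity by parity, whether the correct symbol is $=$, $\leftrightharpoons$, or merely $\hookrightarrow$, so as not to overstate a proper inclusion as an equivalence. The two spots most prone to error, which I would double-check against the kernel formulas, are the parity-dependent identity $\Hcal(K_{\alpha,\bsgamma,s}^{\rsobodd})=\Hcal(K_{\alpha,\bsgamma,s}^{\rsobodda})$ (valid for odd $\alpha$ but failing for even $\alpha>2$) and the collapse $\Hcal(K_{2,\bsgamma,s}^{\rsobodda})=\Hcal(K_{2,\bsgamma,s}^{\rsob})$ special to $\alpha=2$.
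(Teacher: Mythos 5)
Your proposal is correct and matches the paper exactly: the paper offers no separate proof of Theorem~\ref{thm:embedding}, presenting it as a summary assembled from Lemma~\ref{lem:equiv-cosspace}, Lemma~\ref{lem:equiv-korcosspace}, the set relations among the Sobolev subspaces recorded at the end of Section~\ref{sec:RKHS}, and \cite[Theorem~1]{DNP14}, which is precisely the gluing you carry out. Your extra care (the positive-semidefinite kernel-difference argument for the subspace embeddings in general dimension $s$, and the parity checks for $\Hcal(K_{\alpha,\bsgamma,s}^{\rsobodd})=\Hcal(K_{\alpha,\bsgamma,s}^{\rsobodda})$ and the $\alpha=2$ collapse) correctly fills in the bookkeeping the paper leaves implicit.
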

\noindent
For $\alpha>1$, since $\Hcal(K_{\alpha,\bsgamma,s}^{\rsob})\not\subset \Hcal(K_{\alpha,\bsgamma,s}^{\rsobodd}) $ is obvious and 
this theorem shows $\Hcal(K_{\alpha,\bsgamma,s}^{\rkor+\cos}) \leftrightharpoons \Hcal(K_{\alpha,\bsgamma,s}^{\rsobodd})$,
we see that $\Hcal(K_{\alpha,\bsgamma,s}^{\rsob})$ is strictly larger than $\Hcal(K_{\alpha,\bsgamma,s}^{\rkor+\rcos})$, 
which solves an open problem in \cite{DNP14}.

\section{The worst-case error bounds}
Here we study the worst-case error of tent-transformed lattice rules in $\Hcal(K_{2,\bsgamma,s}^{\rsob})$ 
and the worst-case error of symmetrized lattice rules in $\Hcal(K_{\alpha,\bsgamma,s}^{\rsobodda})$ for even $\alpha$.
Since 
$\Hcal(K_{2,\bsgamma,s}^{\rsob})$ and $\Hcal(K_{\alpha,\bsgamma,s}^{\rsobodda})$ are strictly larger than 
$\Hcal(K_{2,\bsgamma,s}^{\rcos})$ and $\Hcal(K_{\alpha,\bsgamma,s}^{\rkor+\rcos})$, respectively, 
our results are stronger than those in \cite{CKNS16,DNP14}.

As already introduced, $B_{\tau}$ denotes the Bernoulli polynomial of degree $\tau\in \NN$, and in what follows, we write $b_{\tau}(\cdot)=B_{\tau}(\cdot)/\tau!$. 
Further, we define $\tilde{B}_{\tau}\colon \RR\to \RR$ by extending $B_{\tau}$ periodically to $\RR$ and write $\tilde{b}_{\tau}(\cdot) = \tilde{B}_{\tau}(\cdot)/\tau!$.
The Fourier series of $b_\tau$ is given by
\begin{align}\label{eq:fourier_ber}
b_{\tau}(x) = -\frac{1}{(2\pi \ri)^\tau}\sum_{k\in \ZZ\setminus \{0\}}\frac{e^{2\pi \ri kx}}{k^\tau} ,
\end{align}
see \cite[Appendix~C]{SJbook}. Here the above series converges only conditionally for $\tau=1$, whereas it converges pointwise absolutely for $\tau\geq 2$.

Moreover we recall that the squared worst-case error of a QMC rule using a point set $P\subset [0,1]^s$ in a RKHS $\Hcal(K)$ with a kernel $K\colon [0,1]^s\times [0,1]^s\to \RR$ is given by
\begin{align}\label{eq:worst-case-error-formula}
& \left(e^{\wor}(P; \Hcal(K))\right)^2 \nonumber \\
& = \int_{[0,1]^{2s}}K(\bsx,\bsy)\rd \bsx\rd \bsy  - \frac{2}{|P|}\sum_{\bsx\in P}\int_{[0,1]^s}K(\bsx,\bsy)\rd \bsy + \frac{1}{|P|^2}\sum_{\bsx,\bsy\in P}K(\bsx,\bsy),
\end{align}
see \cite[Section~6]{SW98}. This equality shall be used in the subsequent analysis.

\subsection{Tent-transformed lattice rules in $\Hcal(K_{2,\bsgamma,s}^{\rsob})$}\label{subsec:tent}
The tent transformation $\phi\colon [0,1]\to [0,1]$ is defined by
\[ \phi(x) := 1-|2x-1|, \]
For a rank-1 lattice point set $P_{N,\bsz}$ with $N\in \NN$ and $\bsz\in \{1,\ldots,N-1\}^s$, 
the tent-transformed rank-1 lattice point set is simply given by
\[ P_{N,\bsz}^{\phi}:=\left\{ \phi( \bsx) \colon \bsx\in P_{N,\bsz} \right\}.\]
where $\phi$ is applied componentwise, i.e., we write $\phi(\bsx)=(\phi(x_1),\ldots,\phi(x_s))$.
A QMC rule using $P_{N,\bsz}^{\phi}$ is called a \emph{tent-transformed (rank-1) lattice rule}.

Since we have
\[ \int_{[0,1]^s}\int_{[0,1]^s}K_{2,\bsgamma,s}^{\rsob}(\bsx,\bsy)\rd \bsx\rd \bsy=1
\quad \text{and}\quad  \int_{[0,1]^s}K_{2,\bsgamma,s}^{\rsob}(\bsx,\bsy)\rd \bsy = 1, \]
for any $\bsx\in [0,1]^s$, it follows from \eqref{eq:worst-case-error-formula} that
\begin{align}\label{eq:worst-case_tent_sob2}
& (e^{\wor}(P^{\phi}_{N,\bsz}; \Hcal(K_{2,\bsgamma,s}^{\rsob})))^2 \notag \\
& = -1+ \frac{1}{N^2}\sum_{\bsx,\bsy\in P_{N,\bsz}}K_{2,\bsgamma,s}^{\rsob}(\phi(\bsx),\phi(\bsy)) \notag \\
& = \frac{1}{N^2}\sum_{\bsx,\bsy\in P_{N,\bsz}}\sum_{\emptyset \neq u\subseteq 1:s}\gamma_u\prod_{j\in u}\left[ -1+K_{2,1,1}^{\rsob}(\phi(x_j),\phi(y_j))\right].
\end{align}
In Appendix~\ref{app:kernel_tent_Fourier}, we prove the following:
\begin{lemma}\label{lem:kernel_tent_Fourier}
For any $x,y\in [0,1]$, we have
\[ -1+K_{2,1,1}^{\rsob}(\phi(x),\phi(y)) = \frac{1}{\pi^4}\sum_{k,\ell =1}^{\infty}c^\phi(k,\ell)\cos(2\pi kx)\cos(2\pi \ell y) \]
where 
\[ c^\phi(k,\ell) = \begin{cases}
\displaystyle \frac{1}{k^4}\left( \frac{58}{3}-\frac{32}{\pi^2 k^2}\right) & k,\ell \colon \odd, k= \ell, \\
\displaystyle \frac{1}{k^2 \ell^2}\left( \frac{52}{3}-\frac{16}{\pi^2 k^2}-\frac{16}{\pi^2 \ell^2}\right) & k,\ell \colon \odd, k\neq \ell, \\
\displaystyle \frac{6}{k^4} & k,\ell \colon \even, k= \ell, \\
\displaystyle \frac{4}{k^2\ell^2} & k,\ell \colon \even, k\neq \ell, \\
 0 & \text{otherwise.}
\end{cases}\]
\end{lemma}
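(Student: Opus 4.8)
The plan is to expand the symmetric kernel $F(x,y) := -1+K_{2,1,1}^{\rsob}(\phi(x),\phi(y))$ directly in the double cosine system $\{\cos(2\pi kx)\cos(2\pi\ell y)\}$. Writing out the kernel from its definition gives
\[
-1+K_{2,1,1}^{\rsob}(x,y) = B_1(x)B_1(y) + \tfrac14 B_2(x)B_2(y) - \tfrac1{24}B_4(|x-y|),
\]
a sum of two separable Bernoulli terms and one translation-invariant term. Since $\phi(1-x)=\phi(x)$, the function $F$ is even about $x=\tfrac12$ and about $y=\tfrac12$, so only modes $\cos(2\pi kx)\cos(2\pi\ell y)$ with $k,\ell\geq 0$ can occur; moreover $\int_0^1 F(x,y)\rd x = 0$ (because $\phi$ preserves the integral and $\int_0^1 K_{2,1,1}^{\rsob}(\cdot,y)\rd y = 1$), which removes all modes with $k=0$ or $\ell=0$. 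Hence it remains to compute $c_{k,\ell} := 4\int_0^1\int_0^1 F(x,y)\cos(2\pi kx)\cos(2\pi\ell y)\rd x\rd y$ for $k,\ell\geq 1$ and to verify $c_{k,\ell}=\pi^{-4}c^\phi(k,\ell)$.

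The first tool I would establish is the tent-transform identity: for any $g\in L^1[0,1]$ and any integer $m\geq 0$, splitting $[0,1]$ at $\tfrac12$ and substituting $u=\phi(x)$ on each half gives
\[
\int_0^1 g(\phi(x))\cos(2\pi mx)\rd x = \int_0^1 g(u)\cos(\pi mu)\rd u .
\]
This converts the full-period cosine coefficients of the tent-transformed profiles into ordinary half-period cosine coefficients, which are elementary. Applying it to $g=B_1$ and $g=B_2$ and integrating by parts gives $\int_0^1 B_1(\phi(x))\cos(2\pi mx)\rd x = -2/(\pi^2 m^2)$ for odd $m$ (and $0$ for even $m$), and $\int_0^1 B_2(\phi(x))\cos(2\pi mx)\rd x = 2/(\pi^2 m^2)$ for even $m$ (and $0$ for odd $m$). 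Because the two Bernoulli terms are separable, they contribute $16/(\pi^4 k^2\ell^2)$ to $c_{k,\ell}$ when $k,\ell$ are both odd (from $B_1$) and $4/(\pi^4 k^2\ell^2)$ when $k,\ell$ are both even (from $B_2$).

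The translation-invariant term is where the work concentrates. Using the Fourier series \eqref{eq:fourier_ber} of $B_4$ and the evenness of $\tilde B_4$, I would write $-\tfrac1{24}B_4(|\phi(x)-\phi(y)|) = \tfrac{1}{8\pi^4}\sum_{j\geq 1}j^{-4}\cos(2\pi j(\phi(x)-\phi(y)))$ and expand the cosine of a difference. The two elementary facts $\cos(2\pi j\phi(x)) = \cos(4\pi jx)$ (immediate from $\phi$ being piecewise linear) and, via the identity above with $g=\sin(2\pi j\,\cdot)$, $\sin(2\pi j\phi(x)) = \sum_{m\ \odd}\tfrac{8j}{\pi(4j^2-m^2)}\cos(2\pi mx)$ then split the $B_4$ contribution into a cosine part and a sine part. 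The cosine part is a pure diagonal: it adds $2/(\pi^4 k^4)$ to $c_{k,k}$ for even $k$ only. The sine part lives entirely on both-odd indices and contributes $\tfrac{8}{\pi^6}S(k,\ell)$, where $S(k,\ell) := \sum_{j\geq 1} j^{-2}(4j^2-k^2)^{-1}(4j^2-\ell^2)^{-1}$.

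The final and main obstacle is evaluating $S(k,\ell)$ in closed form. I would do this by partial fractions in $j^2$, reducing it to the classical sums $\sum_{j\geq 1}j^{-2}=\pi^2/6$ and, for odd $k$, $\sum_{j\geq 1}(4j^2-k^2)^{-1}=1/(2k^2)$ together with $\sum_{j\geq 1}(4j^2-k^2)^{-2}=\pi^2/(16k^2)-1/(2k^4)$ (the last obtained by differentiating the previous one in $k$, using $\cot(\pi k/2)=0$ and $\csc^2(\pi k/2)=1$ for odd $k$). This yields $S(k,\ell)=\tfrac{\pi^2}{6k^2\ell^2}-\tfrac{2}{k^4\ell^2}-\tfrac{2}{k^2\ell^4}$ for $k\neq\ell$ and $S(k,k)=\tfrac{5\pi^2}{12k^4}-\tfrac{4}{k^6}$ on the diagonal. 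Substituting these into $c_{k,\ell}$ and collecting the four pieces ($B_1$, $B_2$, $B_4$-cosine, $B_4$-sine) according to the parity of $(k,\ell)$ reproduces each of the five cases of $c^\phi(k,\ell)$; all interchanges of summation and integration are justified by absolute convergence (the $j^{-4}$ decay in the $B_4$ series, square-summable cosine coefficients elsewhere). Mixed-parity indices receive no contribution from any of the four pieces, giving the final "otherwise" case.
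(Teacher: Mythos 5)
Your proposal is correct and follows essentially the same route as the paper: the same decomposition $b_1(\phi(x))b_1(\phi(y))+b_2(\phi(x))b_2(\phi(y))-\tilde{b}_4(\phi(x)-\phi(y))$, the same key expansion $\sin(2\pi j\phi(x))=\frac{8}{\pi}\sum_{m\,\mathrm{odd}}\frac{j}{4j^2-m^2}\cos(2\pi mx)$ (your tent-substitution identity is exactly how the paper proves its auxiliary Lemma on $\sin(2\pi k\phi(\cdot))$), and the identical partial-fraction evaluation of $\sum_{j\ge 1} j^{-2}(4j^2-k^2)^{-1}(4j^2-\ell^2)^{-1}$ via $\sum_{j}(4j^2-k^2)^{-1}=1/(2k^2)$ and $\sum_{j}(4j^2-k^2)^{-2}=\pi^2/(16k^2)-1/(2k^4)$. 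The only cosmetic difference is that you compute double cosine coefficients directly (using symmetry about $1/2$ and the mean-zero property to rule out modes with $k=0$ or $\ell=0$) where the paper multiplies out the Fourier series of each factor; all intermediate constants and both closed forms for $S(k,\ell)$ agree with the paper's.
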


In order to prove our main result in this subsection, we need to introduce the definition of the dual lattice of a rank-1 lattice point set.
\begin{definition}
For $N,s\in \NN$ and $\bsz\in \{1,\ldots,N-1\}^s$, the dual lattice of the rank-1 lattice point set $P_{N,\bsz}$ is defined by
\[ P_{N,\bsz}^{\perp} := \left\{ \bsk\in \ZZ^s\,|\, \bsk\cdot \bsz \equiv 0\pmod N\right\}. \]
For a non-empty subset $u\subseteq 1:s$, we write
\[ P_{N,\bsz}^{\perp,u} = \left\{ \bsk_u\in (\ZZ\setminus \{0\})^{|u|}\,\big|\, (\bsk_u,\bszero)\cdot \bsz \equiv 0\pmod N\right\}. \]
\end{definition}
\noindent
It is straightforward to see that we have the decomposition
\[ P_{N,\bsz}^{\perp}\setminus \{\bszero\} = \bigcup_{\emptyset \neq u\subseteq 1:s} P_{N,\bsz}^{\perp,u}. \]

\begin{lemma}\label{lem:dual_lattice}
Let $N,s\in \NN$ and $\bsz\in \{1,\ldots,N-1\}^s$. For $\bsk\in \ZZ^s$, we have
\[ \frac{1}{N}\sum_{\bsx\in P_{N,\bsz}}e^{2\pi \ri \bsk\cdot \bsx}=\begin{cases}
1 & \text{if $\bsk\in P_{N,\bsz}^{\perp}$,} \\
0 & \text{otherwise.}
\end{cases} \]
\end{lemma}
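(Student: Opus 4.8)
The plan is to collapse the $s$-dimensional character sum into a single one-dimensional geometric series and then invoke the orthogonality of additive characters on $\ZZ/N\ZZ$. The key observation is that the fractional-part operation appearing in the definition of $P_{N,\bsz}$ is invisible to the exponential, precisely because $\bsk$ is an integer vector.

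First I would write each lattice point explicitly as $\bsx=\{n\bsz/N\}$ for $0\leq n<N$ and note that
\[ \bsk\cdot \left\{\frac{n\bsz}{N}\right\} = \bsk\cdot \frac{n\bsz}{N} - \bsk\cdot \left\lfloor \frac{n\bsz}{N}\right\rfloor, \]
where the second term is an integer since both $\bsk$ and the componentwise floor lie in $\ZZ^s$. Hence $e^{2\pi \ri \bsk\cdot \bsx}=e^{2\pi \ri n(\bsk\cdot \bsz)/N}$ for the point indexed by $n$, and the whole sum reduces to
\[ \frac{1}{N}\sum_{\bsx\in P_{N,\bsz}}e^{2\pi \ri \bsk\cdot \bsx} = \frac{1}{N}\sum_{n=0}^{N-1}\omega^n, \qquad \omega:=e^{2\pi \ri (\bsk\cdot \bsz)/N}. \]

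Next I would evaluate this geometric sum according to whether $\omega=1$. Writing $m:=\bsk\cdot \bsz\in \ZZ$, the case $m\equiv 0\pmod N$ gives $\omega=1$, so every summand equals $1$ and the average is $1$; this is exactly the condition $\bsk\in P_{N,\bsz}^{\perp}$. In the remaining case one has $\omega\neq 1$ but $\omega^N=e^{2\pi \ri m}=1$, so the closed form $\sum_{n=0}^{N-1}\omega^n=(\omega^N-1)/(\omega-1)=0$ applies, yielding the value $0$. Matching these two cases against the definition of $P_{N,\bsz}^{\perp}$ completes the argument. There is no genuine obstacle here: the statement is the standard orthogonality relation underlying lattice rules, and the only point requiring (minimal) care is verifying that the fractional parts contribute trivially to the exponential, which is immediate from $\bsk\in \ZZ^s$.
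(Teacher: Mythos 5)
Your proof is correct and complete. The paper itself gives no argument for this lemma --- it simply cites \cite[Lemma~2.7]{SJbook} --- and what you have written is precisely the standard proof behind that reference: reduce the sum over $P_{N,\bsz}$ to the sum over $n=0,\dots,N-1$ (which is legitimate here since the paper treats $P$ as a multiset), observe that the fractional part drops out of the exponential because $\bsk\in\ZZ^s$, and finish with the geometric series $\sum_{n=0}^{N-1}\omega^n$, which equals $N$ when $\bsk\cdot\bsz\equiv 0 \pmod N$ and $0$ otherwise since $\omega^N=1$. Nothing is missing; your explicit verification that the floor term contributes an integer to $\bsk\cdot\bsx$ is exactly the one point that needs care, and you handle it correctly.
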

\begin{proof}
See \cite[Lemma~2.7]{SJbook}.
\end{proof}

We can show a bound on the worst-case error of tent-transformed lattice rules in $\Hcal(K_{2,\bsgamma,s}^{\rsob})$ as follows.
\begin{theorem}\label{thm:worst-case-error_tent}
Let $N,s\in \NN$ and $\bsz\in \{1,\ldots,N-1\}^s$. The squared worst-case error of a tent-transformed rank-1 lattice rule in $\Hcal(K_{2,\bsgamma,s}^{\rsob})$ is bounded by
\begin{align*}
(e^{\wor}(P_{N,\bsz}^{\phi}; \Hcal(K_{2,\bsgamma,s}^{\rsob})))^2 & \leq \sum_{\emptyset\neq u\subseteq 1:s}\gamma'_u\left(\sum_{\bsk_u\in P_{N,\bsz}^{\perp,u}}\prod_{j\in u}\frac{1}{k_j^2}\right)^2 \\
& \leq (e^{\wor}(P_{N,\bsz}; \Hcal(K_{1,\bsgamma'^{1/2},s}^{\rkor})))^4,
\end{align*}
where $\gamma'_u=\gamma_u(c'/(4\pi^{4}))^{|u|}$ with $c'=58/3$ for $\emptyset \neq u\subseteq 1:s$, and we write $\bsgamma'^{1/2}=(\gamma'^{1/2}_u)_{u\subseteq 1:s}$.
\end{theorem}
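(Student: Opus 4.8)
The plan is to start from the exact identity \eqref{eq:worst-case_tent_sob2} for the squared worst-case error, insert the cosine expansion of Lemma~\ref{lem:kernel_tent_Fourier}, and reduce everything to character sums over the lattice that are evaluated by Lemma~\ref{lem:dual_lattice}. The decisive structural observation will be that, after substitution, every ingredient is nonnegative, so a crude term-by-term bound on the coefficients $c^\phi(k,\ell)$ is legitimate and no sign cancellation can occur.

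First I would expand the product over $j\in u$ in \eqref{eq:worst-case_tent_sob2} using Lemma~\ref{lem:kernel_tent_Fourier}, so that each factor $-1+K_{2,1,1}^{\rsob}(\phi(x_j),\phi(y_j))$ becomes a cosine series in the untransformed variables $x_j,y_j$. Since the sums over $\bsx$ and $\bsy$ are independent, the resulting expression factorizes and one is led to study, for a fixed non-empty $u$, the quantity
\[
S(\bsk_u) := \frac{1}{N}\sum_{\bsx\in P_{N,\bsz}}\prod_{j\in u}\cos(2\pi k_j x_j).
\]
Writing each cosine as $\tfrac12(e^{2\pi\ri k_j x_j}+e^{-2\pi\ri k_j x_j})$ and applying Lemma~\ref{lem:dual_lattice} to each of the $2^{|u|}$ resulting exponential sums, $S(\bsk_u)$ equals $2^{-|u|}$ times the number of sign vectors $\bssigma\in\{-1,1\}^{|u|}$ for which $(\sigma_j k_j)_{j\in u}\in P_{N,\bsz}^{\perp,u}$; in particular $S(\bsk_u)\geq 0$. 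Thus the squared worst-case error becomes a sum over $u$ of $\gamma_u \pi^{-4|u|}\sum_{\bsk_u,\bsl_u\in\NN^{|u|}}\big(\prod_{j\in u}c^\phi(k_j,\ell_j)\big)\,S(\bsk_u)\,S(\bsl_u)$, in which $c^\phi(k,\ell)\geq 0$ by inspecting the five cases of Lemma~\ref{lem:kernel_tent_Fourier} (the negative $\pi^{-2}$ corrections are dominated, e.g.\ $58/3>32/\pi^2$).

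Next I would bound each coefficient by $c^\phi(k,\ell)\leq c'/(k^2\ell^2)$ with $c'=58/3$, which follows from the same five cases by discarding the negative corrections. As all terms are nonnegative this bound is valid termwise, and the double sum decouples into the square of $\sum_{\bsk_u\in\NN^{|u|}}\prod_{j\in u}k_j^{-2}\,S(\bsk_u)$. The bookkeeping step then rewrites this sum over $\NN^{|u|}$ as a sum over the dual lattice: substituting the expression for $S(\bsk_u)$ and using that $(\bsk_u,\bssigma)\mapsto(\sigma_j k_j)_{j\in u}$ is a bijection from $\NN^{|u|}\times\{-1,1\}^{|u|}$ onto $(\ZZ\setminus\{0\})^{|u|}$ preserving $\prod_j k_j^{-2}=\prod_j|h_j|^{-2}$, one finds $\sum_{\bsk_u\in\NN^{|u|}}\prod_j k_j^{-2}\,S(\bsk_u)=2^{-|u|}\sum_{\bsk_u\in P_{N,\bsz}^{\perp,u}}\prod_j k_j^{-2}$. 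Collecting the factors $(c'\pi^{-4})^{|u|}$ and $(2^{-|u|})^2=4^{-|u|}$ produces precisely $\gamma'_u=\gamma_u(c'/(4\pi^4))^{|u|}$ and the first claimed inequality.

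For the second inequality I would invoke the classical Korobov worst-case error formula (obtained from \eqref{eq:worst-case-error-formula} and Lemma~\ref{lem:dual_lattice}), which for smoothness $1$ and weights $\bsgamma'^{1/2}$ reads $(e^{\wor}(P_{N,\bsz};\Hcal(K_{1,\bsgamma'^{1/2},s}^{\rkor})))^2=\sum_{\emptyset\neq u}\gamma'^{1/2}_u\,T_u$ with $T_u:=\sum_{\bsk_u\in P_{N,\bsz}^{\perp,u}}\prod_j k_j^{-2}\geq 0$. Squaring and retaining only the diagonal terms of $(\sum_u \gamma'^{1/2}_u T_u)^2$, which is permissible as all cross terms are nonnegative, yields $\sum_u\gamma'_u T_u^2\leq(e^{\wor}(P_{N,\bsz};\Hcal(K_{1,\bsgamma'^{1/2},s}^{\rkor})))^4$, completing the proof. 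The main obstacle is the parity and sign bookkeeping: one must carefully track how the cosine index set $\NN^{|u|}$ together with the sign vectors from the product-to-sum expansion recombines into the full dual lattice $P_{N,\bsz}^{\perp,u}\subset(\ZZ\setminus\{0\})^{|u|}$, and must secure the nonnegativity of both $c^\phi$ and $S(\bsk_u)$ so that the crude coefficient bound is not spoiled by cancellation.
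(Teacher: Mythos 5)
Your proposal is correct and follows essentially the same route as the paper's proof: the identity \eqref{eq:worst-case_tent_sob2} combined with Lemma~\ref{lem:kernel_tent_Fourier}, the product-to-sum expansion of cosines evaluated via Lemma~\ref{lem:dual_lattice}, the termwise bound $0\leq c^{\phi}(k,\ell)\leq c'/(k^2\ell^2)$ with $c'=58/3$, the recombination of $\NN^{|u|}$ and sign vectors into $P_{N,\bsz}^{\perp,u}$ yielding the factor $4^{-|u|}$, and finally the superadditivity of squares (which the paper phrases as Jensen's inequality, citing the known Korobov error formula from \cite{DSWW06}). Your only deviations are cosmetic: you bound $c^{\phi}$ before passing to the dual lattice rather than after, and you justify the last inequality by dropping nonnegative cross terms instead of invoking Jensen.
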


\begin{proof}
Using \eqref{eq:worst-case_tent_sob2} and Lemma~\ref{lem:kernel_tent_Fourier}, the squared worst-case error is given by
\begin{align*}
& (e^{\wor}(P^{\phi}_{N,\bsz}; \Hcal(K_{2,\bsgamma,s}^{\rsob})))^2 \\
& =  \frac{1}{N^2}\sum_{\bsx,\bsy\in P_{N,\bsz}}\sum_{\emptyset \neq u\subseteq 1:s}\gamma_u\prod_{j\in u}\left[ -1+K_{2,1,1}^{\rsob}(\phi(x_j),\phi(y_j))\right] \\
& =  \sum_{\emptyset\neq u\subseteq 1:s}\frac{\gamma_u}{\pi^{4 |u|}}\sum_{\bsk_u,\bsl_u\in \NN^{|u|}}\frac{1}{N^2}\sum_{\bsx,\bsy\in P_{N,\bsz}} \prod_{j\in u}c^{\phi}(k_j,\ell_j)\cos(2\pi k_jx_j)\cos(2\pi \ell_j y_j) .
\end{align*}
Using the identity $\cos(2\pi kx)=(e^{2\pi \ri kx}+e^{-2\pi \ri kx})/2$ and Lemma~\ref{lem:dual_lattice}, the inner-most sum reduces to
\begin{align*}
& \frac{1}{N^2}\sum_{\bsx,\bsy\in P_{N,\bsz}}\prod_{j\in u}c^{\phi}(k_j,\ell_j)\cos(2\pi k_jx_j)\cos(2\pi \ell_j y_j) \\
& = \frac{1}{4^{|u|}N^2}\sum_{\bsx,\bsy\in P_{N,\bsz}}\sum_{\bssigma_u,\bssigma'_u\in \{\pm 1\}^{|u|}}\prod_{j\in u}c^{\phi}(k_j,\ell_j)e^{2\pi \sigma_jk_jx_j}e^{2\pi \sigma'_j\ell_j y_j} \\
& = \frac{1}{4^{|u|}}\sum_{\bssigma_u,\bssigma'_u\in \{\pm 1\}^{|u|}}\prod_{j\in u}c^{\phi}(k_j,\ell_j)\frac{1}{N^2}\sum_{\bsx\in P_{N,\bsz}}e^{2\pi \ri \bssigma_u(\bsk_u)\cdot \bsx_u}\sum_{\bsy\in P_{N,\bsz}}e^{2\pi \ri \bssigma'_u(\bsl_u)\cdot \bsy_u} \\
& = \frac{1}{4^{|u|}}\sum_{\substack{\bssigma_u,\bssigma'_u\in \{\pm 1\}^{|u|}\\ \bssigma_u(\bsk_u),\bssigma'_u(\bsl_u)\in P_{N,\bsz}^{\perp,u}}}\prod_{j\in u}c^{\phi}(k_j,\ell_j),
\end{align*}
where we have used the notation $\bssigma_u(\bsk_u)=(\sigma_j k_j)_{j\in u}$. Noting that $0\leq c^{\phi}(k,\ell)\leq c'/(k^2 \ell^2)$ for all $k,\ell \in \NN$, we have
\begin{align*}
& (e^{\wor}(P^{\phi}_{N,\bsz}; \Hcal(K_{2,\bsgamma,s}^{\rsob})))^2 \\
& = \sum_{\emptyset\neq u\subseteq 1:s}\frac{\gamma_u}{(4\pi^4)^{|u|}}\sum_{\bsk_u,\bsl_u\in \NN^{|u|}}\sum_{\substack{\bssigma_u,\bssigma'_u\in \{\pm 1\}^{|u|}\\ \bssigma_u(\bsk_u),\bssigma'_u(\bsl_u)\in P_{N,\bsz}^{\perp,u}}}\prod_{j\in u}c^{\phi}(k_j,\ell_j) \\
& = \sum_{\emptyset\neq u\subseteq 1:s}\frac{\gamma_u}{(4\pi^4)^{|u|}}\sum_{\bsk_u,\bsl_u\in P_{N,\bsz}^{\perp,u}}\prod_{j\in u}c^{\phi}(|k_j|,|\ell_j|) \\
& \leq \sum_{\emptyset\neq u\subseteq 1:s}\gamma_u\left(\frac{c'}{4\pi^4}\right)^{|u|}\sum_{\bsk_u,\bsl_u\in P_{N,\bsz}^{\perp,u}}\prod_{j\in u}\frac{1}{k_j^2 \ell_j^2} = \sum_{\emptyset\neq u\subseteq 1:s}\gamma'_u\left(\sum_{\bsk_u\in P_{N,\bsz}^{\perp,u}}\prod_{j\in u}\frac{1}{k_j^2}\right)^2.
\end{align*}
A further upper bound is obtained by using Jensen's inequality:
\begin{align*}
 (e^{\wor}(P^{\phi}_{N,\bsz}; \Hcal(K_{2,\bsgamma,s}^{\rsob})))^2 & \leq \sum_{\emptyset\neq u\subseteq 1:s}\gamma'_u\left(\sum_{\bsk_u\in P_{N,\bsz}^{\perp,u}}\prod_{j\in u}\frac{1}{k_j^2}\right)^2 \\
& \leq \left( \sum_{\emptyset\neq u\subseteq 1:s}\gamma'^{1/2}_u\sum_{\bsk_u\in P_{N,\bsz}^{\perp,u}}\prod_{j\in u}\frac{1}{k_j^2}\right)^2 \\
& = (e^{\wor}(P_{N,\bsz}; \Hcal(K_{1,\bsgamma'^{1/2},s}^{\rkor})))^4,
\end{align*}
where the last equality is well known, see for instance \cite[Proof of Theorem~2]{DSWW06}.
This completes the proof of the theorem. \smartqed
\end{proof}

Using the Fourier series of $b_2$ as shown in \eqref{eq:fourier_ber}, the squared worst-case error $(e^{\wor}(P_{N,\bsz}; \Hcal(K_{1,\bsgamma'^{1/2},s}^{\rkor})))^2$ for a rank-1 lattice point set $P_{N,\bsz}$ has the following computable formula
\begin{align*}
 (e^{\wor}(P_{N,\bsz}; \Hcal(K_{1,\bsgamma'^{1/2},s}^{\rkor})))^2 
& = \sum_{\emptyset\neq u\subseteq 1:s}\gamma'^{1/2}_u\sum_{\bsk_u\in (\ZZ\setminus \{0\})^{|u|}}\frac{1}{N}\sum_{\bsx\in P_{N,\bsz}}\prod_{j\in u}\frac{e^{2\pi \ri k_jx_j}}{k_j^2} \\
& = \frac{1}{N}\sum_{\bsx\in P_{N,\bsz}}\sum_{\emptyset\neq u\subseteq 1:s}\gamma'^{1/2}_u\prod_{j\in u}\sum_{k_j\in \ZZ\setminus \{0\}}\frac{e^{2\pi \ri k_jx_j}}{k_j^2} \\
& = \frac{1}{N}\sum_{\bsx\in P_{N,\bsz}}\sum_{\emptyset \neq u\subset 1:s}(2c'^{1/2})^{|u|}\gamma_u^{1/2}\prod_{j\in u}b_2(x_j). 
\end{align*}
In particular, for product weights $\gamma_u=\prod_{j\in u}\gamma_j$, we have
\begin{align}\label{eq:cbc_tent_criterion}
 (e^{\wor}(P_{N,\bsz}; \Hcal(K_{1,\bsgamma'^{1/2},s}^{\rkor})))^2 = -1+\frac{1}{N}\sum_{\bsx\in P_{N,\bsz}}\prod_{j=1}^{s}\left[ 1+2c'^{1/2}\gamma_j^{1/2}b_2(x_j)\right]. 
\end{align}

A worst-case error bound of a rank-1 lattice rule found by the so-called component-by-component (CBC) algorithm in the weighted Korobov space with general weights was proved, for instance, in \cite[Theorem~5]{DSWW06}, which is therefore also applicable in our setting.

\begin{corollary}\label{cor:tent-lattice}
Let $N$ be a prime. The CBC algorithm using the squared worst-case error $(e^{\wor}(P_{N,\bsz}; \Hcal(K_{1,\bsgamma'^{1/2},s}^{\rkor})))^2$ as a quality criterion can find a generating vector $\bsz\in \{1,\ldots,N-1\}^{s}$ such that
\begin{align*}
e^{\wor}(P_{N,\bsz}^{\phi}; \Hcal(K_{2,\bsgamma,s}^{\rsob})) & \leq (e^{\wor}(P_{N,\bsz}; \Hcal(K_{1,\bsgamma'^{1/2},s}^{\rkor})))^2 \\
 & \leq \frac{1}{(N-1)^{1/\lambda}}\left( \sum_{\emptyset \neq u\subset 1:s}\gamma^{\lambda/2}_u \left( \frac{2^{1-\lambda}c'^{\lambda/2}\zeta(2 \lambda)}{\pi^{2\lambda}}\right)^{|u|} \right)^{1/\lambda}
\end{align*}
holds for any $\lambda\in (1/2,1]$, where $\zeta(\cdot)$ denotes the zeta function.
\end{corollary}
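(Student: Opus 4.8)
The plan is to chain two facts: the bound just established in Theorem~\ref{thm:worst-case-error_tent} and the standard component-by-component (CBC) convergence estimate for weighted Korobov spaces. First I would observe that the first inequality of the corollary is nothing but Theorem~\ref{thm:worst-case-error_tent} after extracting a square root. That theorem asserts
\[ (e^{\wor}(P_{N,\bsz}^{\phi}; \Hcal(K_{2,\bsgamma,s}^{\rsob})))^2 \leq (e^{\wor}(P_{N,\bsz}; \Hcal(K_{1,\bsgamma'^{1/2},s}^{\rkor})))^4 , \]
so taking square roots of both sides yields
\[ e^{\wor}(P_{N,\bsz}^{\phi}; \Hcal(K_{2,\bsgamma,s}^{\rsob})) \leq (e^{\wor}(P_{N,\bsz}; \Hcal(K_{1,\bsgamma'^{1/2},s}^{\rkor})))^2 \]
for every $\bsz\in\{1,\dots,N-1\}^s$. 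This step is purely formal and reduces the task to controlling the worst-case error of an ordinary (untransformed) rank-1 lattice rule in the single weighted Korobov space $\Hcal(K_{1,\bsgamma'^{1/2},s}^{\rkor})$, which is exactly the quantity the CBC algorithm is being asked to minimise.

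Second, I would invoke the established CBC bound for weighted Korobov spaces, as cited in \cite[Theorem~5]{DSWW06}, applied with smoothness parameter $\alpha=1$ and weights $\bsgamma'^{1/2}=(\gamma'^{1/2}_u)_{u\subseteq 1:s}$. For prime $N$ this produces a generating vector $\bsz$, obtainable by the CBC algorithm using $(e^{\wor}(P_{N,\bsz}; \Hcal(K_{1,\bsgamma'^{1/2},s}^{\rkor})))^2$ as the quality criterion, such that
\[ (e^{\wor}(P_{N,\bsz}; \Hcal(K_{1,\bsgamma'^{1/2},s}^{\rkor})))^2 \leq \frac{1}{(N-1)^{1/\lambda}}\left( \sum_{\emptyset \neq u\subseteq 1:s}(\gamma'^{1/2}_u)^{\lambda}\left(2\zeta(2\lambda)\right)^{|u|} \right)^{1/\lambda} \]
for every $\lambda$ in the admissible range $1/(2\alpha)<\lambda\leq 1$, which with $\alpha=1$ is exactly $1/2<\lambda\leq 1$; this range is forced by the requirement that $\zeta(2\lambda)$ be finite.

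Third, I would substitute the explicit weights. From $\gamma'_u=\gamma_u(c'/(4\pi^4))^{|u|}$ one gets $\gamma'^{1/2}_u=\gamma_u^{1/2}(c'^{1/2}/(2\pi^2))^{|u|}$, so that the per-coordinate constant collapses as
\[ (\gamma'^{1/2}_u)^{\lambda}\left(2\zeta(2\lambda)\right)^{|u|} = \gamma_u^{\lambda/2}\left(\frac{c'^{\lambda/2}}{2^{\lambda}\pi^{2\lambda}}\cdot 2\zeta(2\lambda)\right)^{|u|} = \gamma_u^{\lambda/2}\left(\frac{2^{1-\lambda}c'^{\lambda/2}\zeta(2\lambda)}{\pi^{2\lambda}}\right)^{|u|} . \]
Inserting this identity into the Korobov estimate of the second step reproduces verbatim the right-hand side claimed in the corollary, and combining it with the first inequality completes the argument.

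The only genuine work is bookkeeping rather than analysis. The main point to verify is that the cited CBC bound appears in precisely this normalisation for $\alpha=1$ — in particular with the factor $2\zeta(2\lambda)$ per coordinate and with the prefactor $(N-1)^{-1/\lambda}$ and no stray overall power of $2$ — since any discrepancy there would leave an extra $2^{1/\lambda}$ in front. Granting that convention, the half-power weight substitution must be propagated carefully so the powers of $2$, $\pi$, and $c'$ recombine exactly as displayed. No new analytic ingredient is needed beyond Theorem~\ref{thm:worst-case-error_tent} and the standard CBC machinery, so I do not expect a substantive obstacle, only the need to match constants and conventions precisely.
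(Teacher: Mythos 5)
Your proposal is correct and matches the paper's (implicit) proof exactly: the paper likewise obtains the first inequality by taking square roots in Theorem~\ref{thm:worst-case-error_tent} and then invokes the CBC bound of \cite[Theorem~5]{DSWW06} for the weighted Korobov space $\Hcal(K_{1,\bsgamma'^{1/2},s}^{\rkor})$, with the factor-of-two shift in the smoothness convention (so that $2\zeta(2\lambda)$ per coordinate and $\lambda\in(1/2,1]$ are the right translations) and the substitution $\gamma'^{1/2}_u=\gamma_u^{1/2}(c'^{1/2}/(2\pi^2))^{|u|}$ recombining into $2^{1-\lambda}c'^{\lambda/2}\zeta(2\lambda)/\pi^{2\lambda}$, just as you computed. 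The normalisation caveat you flag is indeed the only delicate point, and you resolved it correctly.
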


\begin{remark}
The number of points contained in a tent-transformed lattice point set $P_{N,\bsz}^{\phi}$ is at most $N$.
Under certain conditions on the weights, the worst-case error bound depends only polynomially on $s$
or even becomes independent of $s$. We refer to \cite{Dic04,DSWW06,Kuo03} for such results which also apply in our setting.
\end{remark}

\begin{remark}
The fast CBC algorithm using the fast Fourier transform due to Nuyens and Cools \cite{NC06} can be used to search for a good generating vector $\bsz$. For product weights, we need $O(sN\log N)$ arithmetic operations and $O(N)$ memory. Moreover, for product and order dependent (POD) weights, 
\[ \gamma_u = \Gamma_{|u|}\prod_{j\in u}\gamma_j, \]
we need $O(sN\log N+s^2 N)$ arithmetic operations and $O(sN)$ memory \cite{KSS11}.
\end{remark}

\begin{remark}
Calculating the Fourier series given in Lemma~\ref{lem:kernel_tent_Fourier} is the crucial step to obtain the results of this section.
We see that the decay of the Fourier coefficients $c^{\phi}(k,\ell)\in O(1/(k^2 \ell^2))$ leads to the almost optimal order of convergence $O(N^{-2+\varepsilon})$.
In principle, a similar calculation can be done for a general $\alpha> 2$. As can be expected from the proof in Appendix~\ref{app:kernel_tent_Fourier}, however, 
the decay of the Fourier coefficients remains $c^{\phi}(k,\ell)\in O(1/(k^2 \ell^2))$, which means that we cannot improve the order of convergence.
\end{remark}

\subsection{Symmetrized lattice rules in $\Hcal(K_{\alpha,\bsgamma,s}^{\rsobodda})$}\label{subsec:sym}
For $u \subseteq 1:s$
and $\bsx = (x_1, \dots, x_s) \in [0,1]^s$, we define
$\rsym_u(\bsx) = (y_1, \dots, y_s)\in [0,1]^s$
where
$y_j := 1-x_j$ if $j \in u$ and $y_j := x_j$ otherwise.
For a rank-1 lattice point set $P_{N,\bsz}$,
the symmetrization of $P_{N,\bsz}$ is defined as the multiset 
\[ P_{N,\bsz}^{\rsym} := \bigcup_{u \subseteq 1:s} \rsym_u(P_{N,\bsz}). \]
A QMC rule using $P_{N,\bsz}^{\rsym}$ is called a \emph{symmetrized (rank-1) lattice rule}.
The worst-case error of symmetrized lattice rules in $\Hcal(K_{\alpha,\bsgamma,s}^{\rsobodda})$ is bounded as follows.
We give the proof in Appendix~\ref{app:worst-case-error_sym}.
\begin{theorem}\label{thm:worst-case-error_sym}
Let $N,s\in \NN$ and $\bsz\in \{1,\ldots,N-1\}^s$. The squared worst-case error of a symmetrized rank-1 lattice rule in $\Hcal(K_{\alpha,\bsgamma,s}^{\rsobodda})$ for even $\alpha$ is bounded by
\begin{align*}
(e^{\wor}(P_{N,\bsz}^{\rsym}; \Hcal(K_{\alpha,\bsgamma,s}^{\rsobodda})))^2 & \leq \sum_{\emptyset\neq u\subseteq 1:s}\gamma''_u\left(\sum_{\bsk_u\in P_{N,\bsz}^{\perp,u}}\prod_{j\in u}\frac{1}{k_j^\alpha}\right)^2 \\
& \leq (e^{\wor}(P_{N,\bsz}; \Hcal(K_{\alpha/2,\bsgamma''^{1/2},s}^{\rkor})))^4,
\end{align*}
where $\gamma''_u=\gamma_u(3/(2^{2\alpha+1}\pi^{2\alpha}))^{|u|}$ for $\emptyset \neq u\subseteq 1:s$ and $\bsgamma''^{1/2}=(\gamma''^{1/2}_u)_{u\subseteq 1:s}$.
\end{theorem}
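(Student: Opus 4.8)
The plan is to mirror the proof of Theorem~\ref{thm:worst-case-error_tent}, replacing the tent transformation by the symmetrization and replacing Lemma~\ref{lem:kernel_tent_Fourier} by an analogous (in fact cleaner) cosine expansion of the \emph{symmetrized} kernel. First I would observe that applying the symmetrized rule is the same as applying the plain lattice rule $P_{N,\bsz}$ to the componentwise-symmetrized kernel
\[ \tilde{K}_{\alpha,\bsgamma,s}(\bsx,\bsy) := \frac{1}{2^{2s}}\sum_{u,u'\subseteq 1:s}K_{\alpha,\bsgamma,s}^{\rsobodda}(\rsym_u(\bsx),\rsym_{u'}(\bsy)), \]
the factor $|P_{N,\bsz}^{\rsym}|^2=2^{2s}N^2$ cancelling the averaging $2^{-2s}$. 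Since $\int_0^1 B_\tau(t)\rd t=0$ for $\tau\geq 1$, the kernel integrates to one against $\rd\bsy$ and against $\rd\bsx\rd\bsy$, exactly as in \eqref{eq:worst-case_tent_sob2}, so \eqref{eq:worst-case-error-formula} collapses to
\[ (e^{\wor}(P_{N,\bsz}^{\rsym};\Hcal(K_{\alpha,\bsgamma,s}^{\rsobodda})))^2 = -1+\frac{1}{N^2}\sum_{\bsx,\bsy\in P_{N,\bsz}}\tilde{K}_{\alpha,\bsgamma,s}(\bsx,\bsy), \]
and the product structure of the kernel reduces everything to the univariate symmetrized factor $-1+\tilde{K}(x,y)$.

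The key computation is the cosine expansion of this univariate factor, which is the analogue of Lemma~\ref{lem:kernel_tent_Fourier}. Using $B_\tau(1-t)=(-1)^\tau B_\tau(t)$, the symmetrization kills every odd-degree term $B_\tau(x)B_\tau(y)$ (their four reflected copies cancel in pairs), keeps the single even term $\tau=\alpha$, and turns $(-1)^{\alpha+1}B_{2\alpha}(|x-y|)/(2\alpha)!$ into $\tfrac12(-1)^{\alpha+1}[B_{2\alpha}(|x-y|)+B_{2\alpha}(|1-x-y|)]/(2\alpha)!$; this is precisely where the definition of $\Hcal(K_{\alpha,\bsgamma,s}^{\rsobodda})$ pays off, since its only surviving even term is $\tau=\alpha$. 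Inserting the Fourier series \eqref{eq:fourier_ber} of $b_\alpha$ and $b_{2\alpha}$ (both even in $k$, as $\alpha$ is even) and applying $\cos(2\pi k(x-y))+\cos(2\pi k(x+y))=2\cos(2\pi kx)\cos(2\pi ky)$, I expect
\[ -1+\tilde{K}(x,y)=\frac{1}{(2\pi)^{2\alpha}}\sum_{k,\ell=1}^\infty \tilde{c}(k,\ell)\cos(2\pi kx)\cos(2\pi\ell y), \]
with $\tilde{c}(k,\ell)=6/k^{2\alpha}$ when $k=\ell$ and $\tilde{c}(k,\ell)=4/(k^\alpha\ell^\alpha)$ otherwise; in particular $0\leq\tilde{c}(k,\ell)\leq 6/(k^\alpha\ell^\alpha)$ for all $k,\ell$.

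With this expansion in hand, the remainder follows the argument in the proof of Theorem~\ref{thm:worst-case-error_tent} almost verbatim. Writing $\cos(2\pi kx)=(e^{2\pi\ri kx}+e^{-2\pi\ri kx})/2$ and summing over $P_{N,\bsz}$ via Lemma~\ref{lem:dual_lattice} restricts the frequencies to the dual lattice, producing
\[ (e^{\wor}(P_{N,\bsz}^{\rsym};\Hcal(K_{\alpha,\bsgamma,s}^{\rsobodda})))^2=\sum_{\emptyset\neq u\subseteq 1:s}\frac{\gamma_u}{(4(2\pi)^{2\alpha})^{|u|}}\sum_{\bsk_u,\bsl_u\in P_{N,\bsz}^{\perp,u}}\prod_{j\in u}\tilde{c}(|k_j|,|\ell_j|); \]
bounding $\tilde{c}(|k|,|\ell|)\leq 6/(|k|^\alpha|\ell|^\alpha)$ gives the first claimed inequality with $\gamma''_u=\gamma_u(3/(2^{2\alpha+1}\pi^{2\alpha}))^{|u|}$ (using $6/(4(2\pi)^{2\alpha})=3/(2^{2\alpha+1}\pi^{2\alpha})$), and Jensen's inequality then yields the fourth-power Korobov bound, with smoothness $\alpha/2$ because the Korobov weight function carries $|k_j|^{-2\beta}$ and here the exponent is $\alpha$. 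The main obstacle is the middle step: verifying the exact cancellation/survival pattern of the Bernoulli terms under symmetrization and, in particular, handling the reflected term $B_{2\alpha}(|1-x-y|)$ so that it recombines into the clean $\cos(2\pi kx)\cos(2\pi ky)$ form; once $\tilde{c}(k,\ell)$ and the bound $\tilde{c}\leq 6/(k^\alpha\ell^\alpha)$ are pinned down, the rest is a direct transcription of the tent-transform proof.
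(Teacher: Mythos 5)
Your proposal is correct and follows essentially the same route as the paper's proof in Appendix~B: symmetrize the kernel, observe that the odd-degree Bernoulli terms cancel while $b_\alpha(x)b_\alpha(y)$ survives, expand into the cosine series with coefficients $6/k^{2\alpha}$ on the diagonal and $4/(k^\alpha\ell^\alpha)$ off it, bound by $6/(k^\alpha\ell^\alpha)$, and finish via the dual-lattice reduction and Jensen's inequality exactly as in Theorem~\ref{thm:worst-case-error_tent}. The only cosmetic difference is that you symmetrize $\tilde{b}_{2\alpha}(x-y)$ in physical space (obtaining $\tfrac12[\tilde{b}_{2\alpha}(x-y)+\tilde{b}_{2\alpha}(x+y)]$ before using the product-to-sum identity), whereas the paper applies $\rsym$ termwise to the Fourier series; these computations coincide.
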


Following a similar argument as in Section~\ref{subsec:tent}, the squared worst-case error $(e^{\wor}(P_{N,\bsz}; \Hcal(K_{\alpha/2,\bsgamma''^{1/2},s}^{\rkor})))^2$ can be shown to have the computable formulas:
\begin{align*}
&  (e^{\wor}(P_{N,\bsz}; \Hcal(K_{\alpha/2,\bsgamma''^{1/2},s}^{\rkor})))^2 \\
& = \frac{1}{N}\sum_{\bsx\in P_{N,\bsz}}\sum_{\emptyset \neq u\subset 1:s}\left(\frac{\sqrt{3}}{(-1)^{1+\alpha/2}\sqrt{2}}\right)^{|u|}\gamma_u^{1/2}\prod_{j\in u}b_\alpha(x_j),
\end{align*}
for general weights $\bsgamma$, and
\begin{align*}
& (e^{\wor}(P_{N,\bsz}; \Hcal(K_{\alpha/2,\bsgamma''^{1/2},s}^{\rkor})))^2 \\
& = -1+\frac{1}{N}\sum_{\bsx\in P_{N,\bsz}}\prod_{j=1}^{s}\left[ 1+\frac{\sqrt{3}}{(-1)^{1+\alpha/2}\sqrt{2}}\gamma_j^{1/2}b_\alpha(x_j)\right] ,
\end{align*}
for product weights.

\begin{corollary}
Let $N$ be a prime. The fast CBC algorithm using the squared worst-case error $(e^{\wor}(P_{N,\bsz}; \Hcal(K_{\alpha/2,\bsgamma''^{1/2},s}^{\rkor})))^2$ as a quality criterion can find a generating vector $\bsz\in \{1,\ldots,N-1\}^{s}$ such that
\begin{align*}
& e^{\wor}(P_{N,\bsz}^{\rsym}; \Hcal(K_{\alpha,\bsgamma,s}^{\rsobodda})) \\
& \leq (e^{\wor}(P_{N,\bsz}; \Hcal(K_{\alpha/2,\bsgamma''^{1/2},s}^{\rkor})))^2 \\
& \leq \frac{1}{(N-1)^{1/\lambda}}\left( \sum_{\emptyset \neq u\subset 1:s}\gamma^{\lambda/2}_u \left( \frac{3^{\lambda/2}\zeta(\alpha \lambda)}{2^{(2\alpha+1)\lambda/2-1}\pi^{\alpha\lambda}}\right)^{|u|} \right)^{1/\lambda}
\end{align*}
holds for any $\lambda\in (1/\alpha,1]$.
\end{corollary}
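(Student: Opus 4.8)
The plan is to combine the worst-case error bound from Theorem~\ref{thm:worst-case-error_sym} with the known CBC error bound for weighted Korobov spaces. First I would observe that the chain of inequalities in Theorem~\ref{thm:worst-case-error_sym} already gives
\[
e^{\wor}(P_{N,\bsz}^{\rsym}; \Hcal(K_{\alpha,\bsgamma,s}^{\rsobodda})) \leq (e^{\wor}(P_{N,\bsz}; \Hcal(K_{\alpha/2,\bsgamma''^{1/2},s}^{\rkor})))^2,
\]
so the task reduces entirely to bounding the \emph{squared} worst-case error of an ordinary rank-1 lattice rule in the Korobov space $\Hcal(K_{\alpha/2,\bsgamma''^{1/2},s}^{\rkor})$ with smoothness parameter $\alpha/2>1/2$ (valid since $\alpha\geq 2$ is even) and modified weights $\bsgamma''^{1/2}$. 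This is precisely the setting covered by the standard CBC existence results such as \cite[Theorem~5]{DSWW06}.

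Next I would invoke the CBC error bound for weighted Korobov spaces. The standard statement gives, for a generating vector $\bsz$ constructed component-by-component using the squared worst-case error as quality criterion, a bound of Jensen type: for any $\lambda\in(1/(2\beta),1]$ where $\beta=\alpha/2$ is the smoothness parameter,
\[
(e^{\wor}(P_{N,\bsz}; \Hcal(K_{\beta,\bsdelta,s}^{\rkor})))^2 \leq \frac{1}{(N-1)^{1/\lambda}}\left(\sum_{\emptyset\neq u\subseteq 1:s}\delta_u^{\lambda}\left(2\zeta(2\beta\lambda)\right)^{|u|}\right)^{1/\lambda},
\]
with $\bsdelta=\bsgamma''^{1/2}$ and $\beta=\alpha/2$, so that $2\beta\lambda=\alpha\lambda$ and the admissible range of $\lambda$ becomes $(1/\alpha,1]$ as claimed. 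Here the primality of $N$ enters, guaranteeing that the CBC construction proceeds with a full set of $N-1$ candidate values at each coordinate.

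The remaining step is purely algebraic bookkeeping: substitute $\delta_u = \gamma''^{1/2}_u = \gamma_u^{1/2}\bigl(3/(2^{2\alpha+1}\pi^{2\alpha})\bigr)^{|u|/2}$ and collect the per-coordinate constants. Raising the weight to the power $\lambda$ gives $\delta_u^{\lambda}=\gamma_u^{\lambda/2}\bigl(3/(2^{2\alpha+1}\pi^{2\alpha})\bigr)^{|u|\lambda/2}$, and multiplying by $(2\zeta(\alpha\lambda))^{|u|}$ yields the per-coordinate factor
\[
\left(\frac{3^{\lambda/2}}{2^{(2\alpha+1)\lambda/2}\pi^{\alpha\lambda}}\cdot 2\zeta(\alpha\lambda)\right)^{|u|} = \left(\frac{3^{\lambda/2}\zeta(\alpha\lambda)}{2^{(2\alpha+1)\lambda/2-1}\pi^{\alpha\lambda}}\right)^{|u|},
\]
which matches the stated constant exactly. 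I anticipate no real obstacle here; the only point demanding care is tracking the factor of $2$ arising from the $2\zeta$ term in the Korobov bound against the $2^{(2\alpha+1)\lambda/2}$ in the denominator, and verifying that the precise form of the CBC bound one cites produces the constant $2\zeta(2\beta\lambda)^{|u|}$ per coordinate rather than some variant. Since everything rests on a previously established theorem and a direct substitution, the corollary follows immediately.
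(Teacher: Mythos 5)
Your proposal matches the paper's (implicit) argument exactly: the paper likewise derives this corollary by chaining Theorem~\ref{thm:worst-case-error_sym} with the CBC error bound of \cite[Theorem~5]{DSWW06} applied to $\Hcal(K_{\alpha/2,\bsgamma''^{1/2},s}^{\rkor})$, just as it did for Corollary~\ref{cor:tent-lattice}. Your substitution of $\delta_u=\gamma_u^{1/2}\bigl(3/(2^{2\alpha+1}\pi^{2\alpha})\bigr)^{|u|/2}$ and the bookkeeping with the factor $2\zeta(\alpha\lambda)$ (including the parameter convention making the admissible range $(1/\alpha,1]$) reproduce the stated constant correctly, so the proof is complete and correct.
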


\begin{remark}
As proven in \cite[Lemma~2]{DNP14}, the number of points contained in a symmetrized lattice point set $P_{N,\bsz}^{\rsym}$
is $2^{s-1}(N+1)$ for odd $N$ and $2^{s-1}N+1$ for even $N$. 
Thus, in terms of the number of points, 
the exponential dependence of the worst-case error bound on $s$ cannot be avoided no matter how fast the weights decay.
This is a major drawback of symmetrized lattice rules.
\end{remark}

\section{Numerical results}
First let us consider a simple bi-variate test function 
\[ f(x,y)=\frac{ye^{xy}}{e-2}, \]
whose integral over $[0,1]^2$ is $1$. We approximate $I(f)$ 
by using lattice rules, tent-transformed lattice rules and symmetrized lattice rules.
For all of the three rules, Fibonacci lattice point sets are employed as the underlying nodes.
Note that $f\in \Hcal(K_{\alpha,\bsgamma,2}^{\rsob})$ for any $\alpha\in \NN$, 
but $f\notin \Hcal(K_{\alpha,\bsgamma,2}^{\rsobodda})$ for $\alpha>2$ 
since it does not satisfy the periodic condition on high order derivatives.
Further since $\Hcal(K_{2,\bsgamma,2}^{\rsobodda})=\Hcal(K_{2,\bsgamma,2}^{\rsob})$ holds,
both tent-transformed lattice rules and symmetrized lattice rules are expected to achieve $O(N^{-2+\varepsilon})$ convergence.
This can be confirmed in Figure~\ref{fig:fibonacci}.
The integration error for symmetrized lattice rules converges exactly with order $N^{-2}$,
whereas the integration error for tent-transformed ones behaves periodically but still converges approximately with order $N^{-2}$.
Although a detailed analysis on the periodic behavior is beyond the scope of this paper, this periodicity coincides with whether the number of the underlying nodes is even or odd.

\begin{figure*}
\centering
\includegraphics[width=0.5\textwidth]{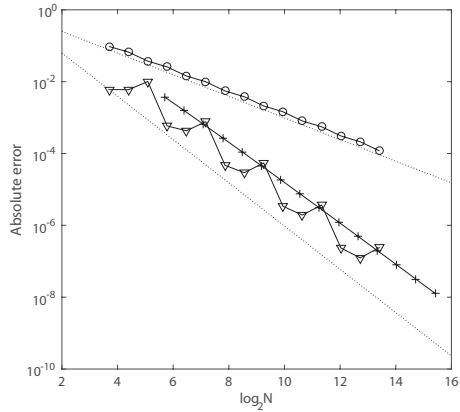}
\caption{The results for $f$ by using lattice rules ($\circ$), tent-transformed lattice rules ($\triangledown$) and symmetrized lattice rules ($+$). The dotted lines correspond to $O(N^{-1})$ and $O(N^{-2})$, respectively.}
\label{fig:fibonacci}
\end{figure*}

Let us move onto the high-dimensional setting. We consider the following test integrands 
\begin{align*}
 f_1(\bsx) & = \prod_{j=1}^{s}\left[ 1+\omega_j\left(x_j^{c_1}-\frac{1}{1+c_1}\right)\right] ,\\
 f_2(\bsx) & = \prod_{j=1}^{s}\left[ 1+\frac{\omega_j}{1+\omega_j x_j^{c_2}} \right] ,
\end{align*}
for the non-negative parameters $c_1,c_2$ and $\omega_j$.
These smooth integrands were originally used in \cite{DGYxx}.
Note that the exact values of the integrals for $f_1$ and for $f_2$ with the special cases $c_2=1$ and $c_2=2$ are known: $I(f_1)=1$ and
\[ I(f_2) = \begin{cases}
\displaystyle \prod_{j=1}^{s}\left[ 1+\log(1+\omega_j)\right] & \text{for $c_2=1$,} \\
\displaystyle \prod_{j=1}^{s}\left[ 1+\sqrt{\omega_j}\tan^{-1}(\sqrt{\omega_j})\right] & \text{for $c_2=2$.}
\end{cases} \]

\begin{figure*}
\centering
\includegraphics[width=0.4\textwidth]{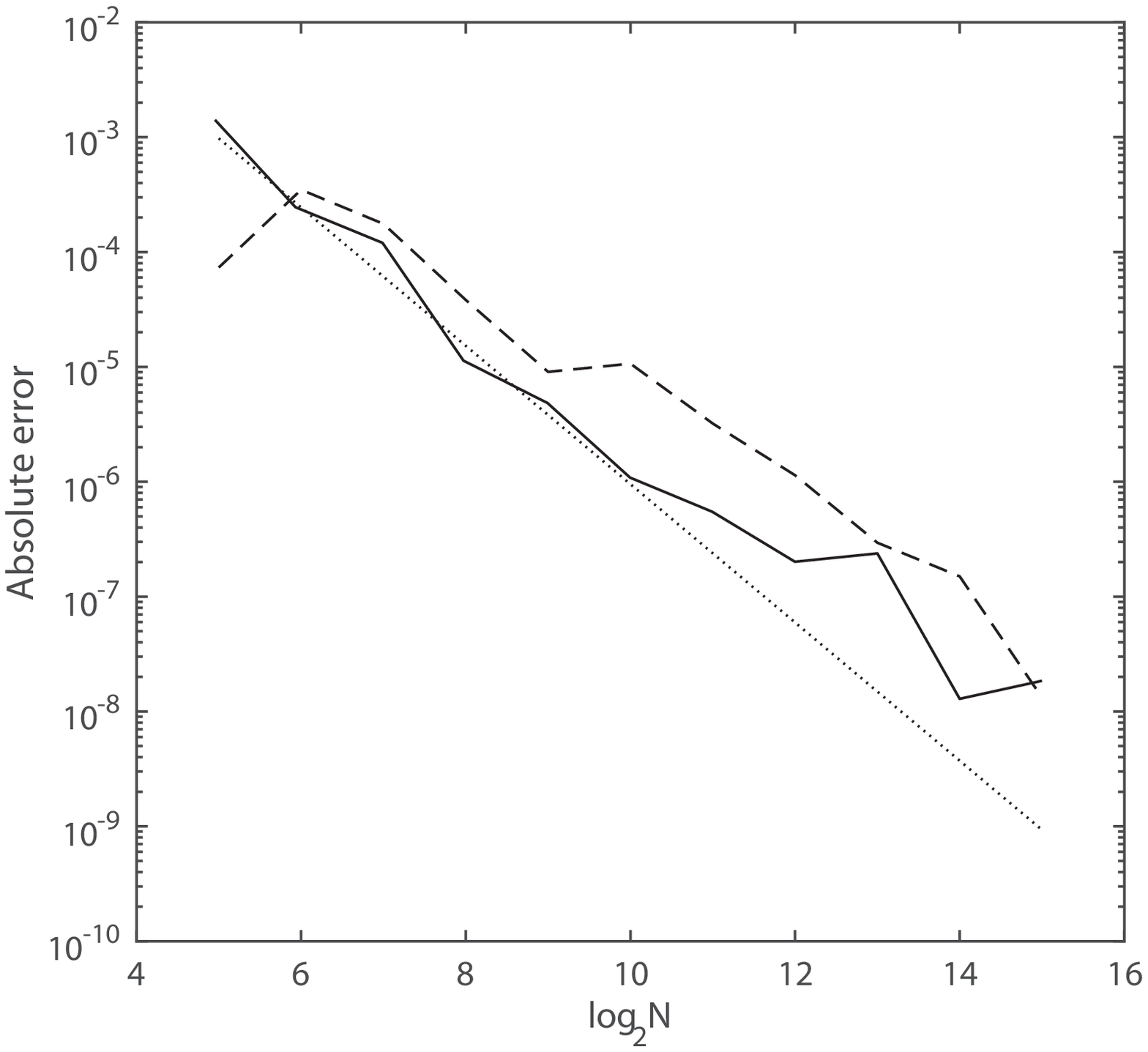}
\includegraphics[width=0.4\textwidth]{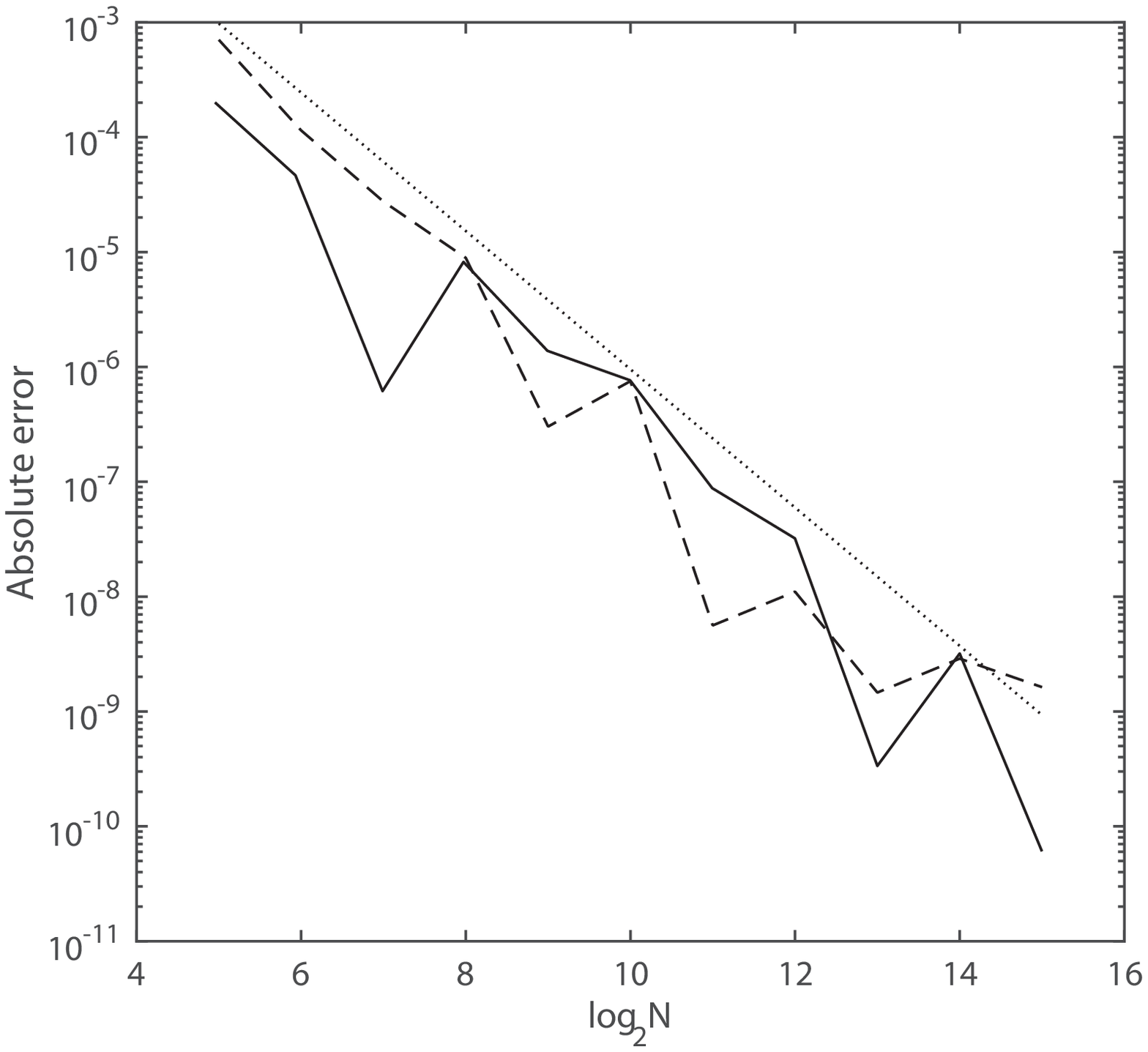}\\
\includegraphics[width=0.4\textwidth]{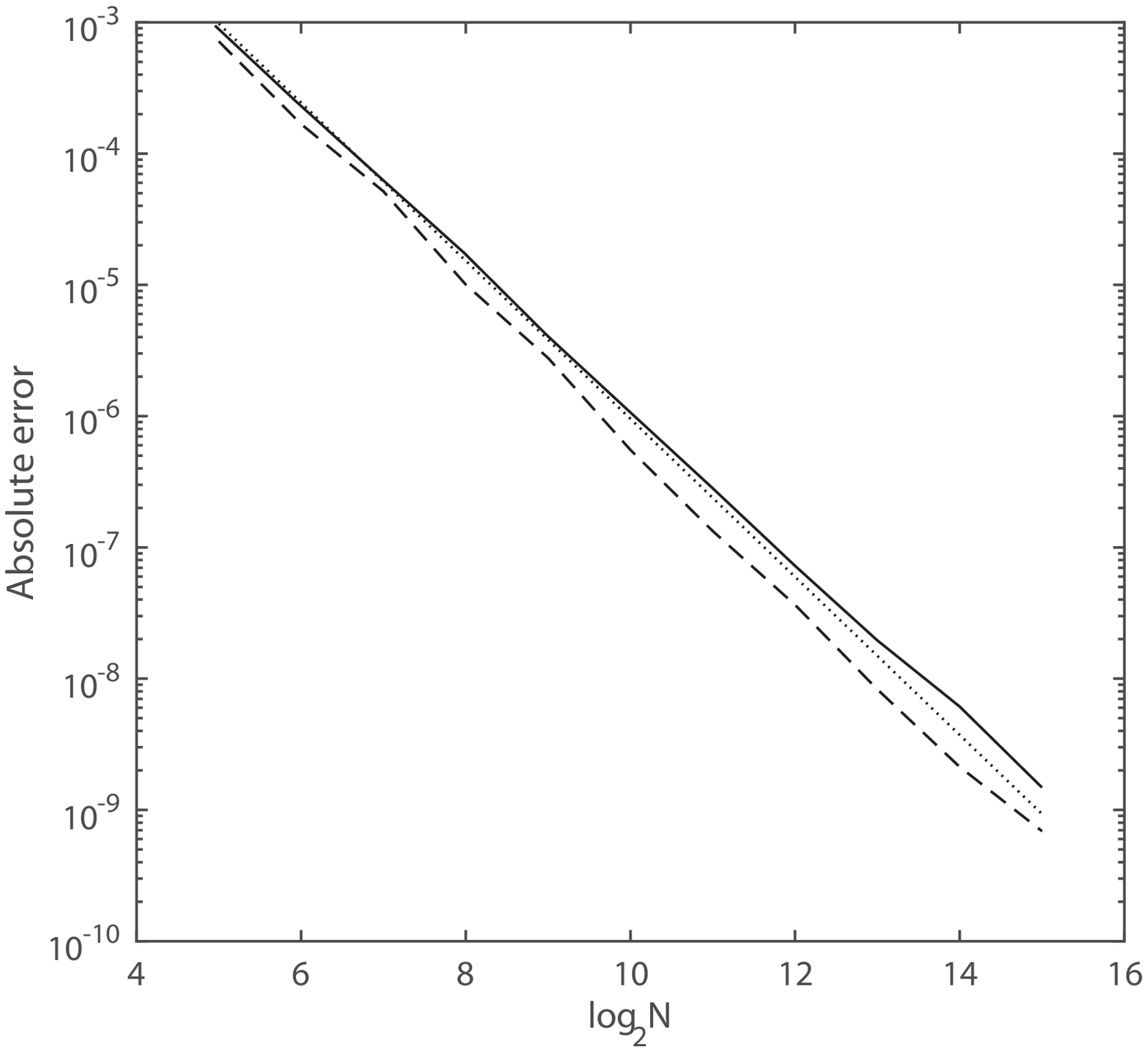}
\includegraphics[width=0.4\textwidth]{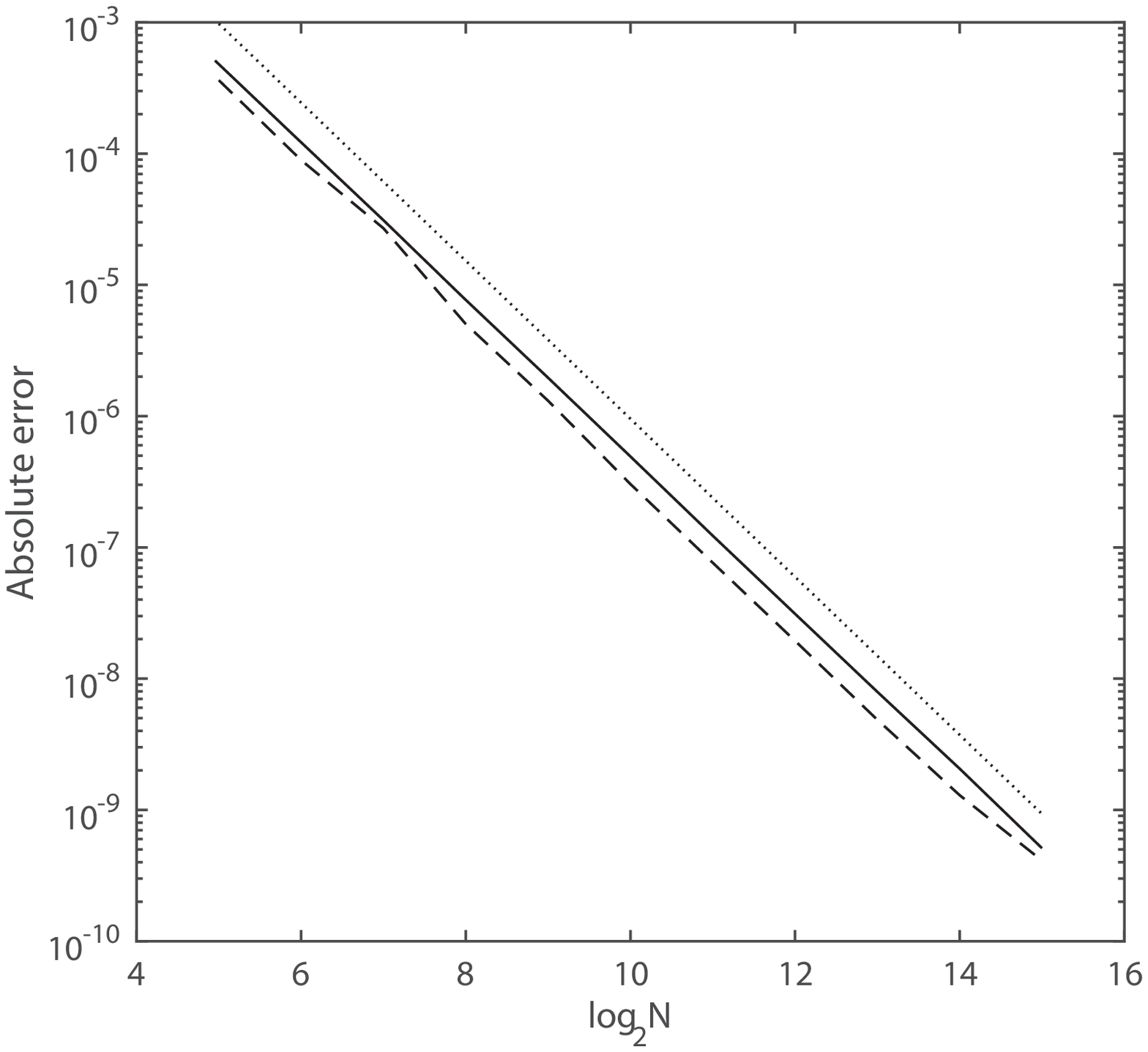}\\
\includegraphics[width=0.4\textwidth]{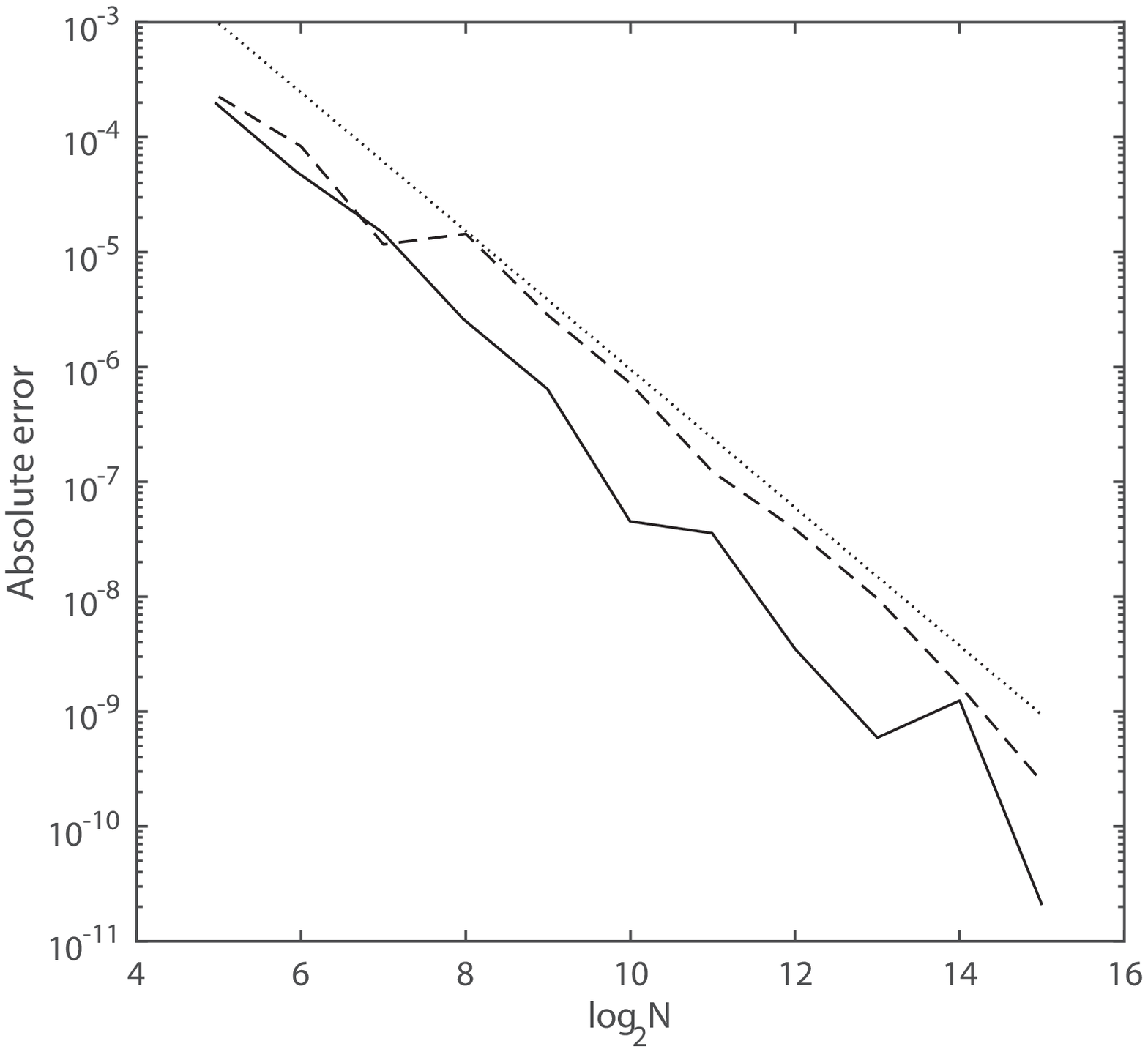}
\includegraphics[width=0.4\textwidth]{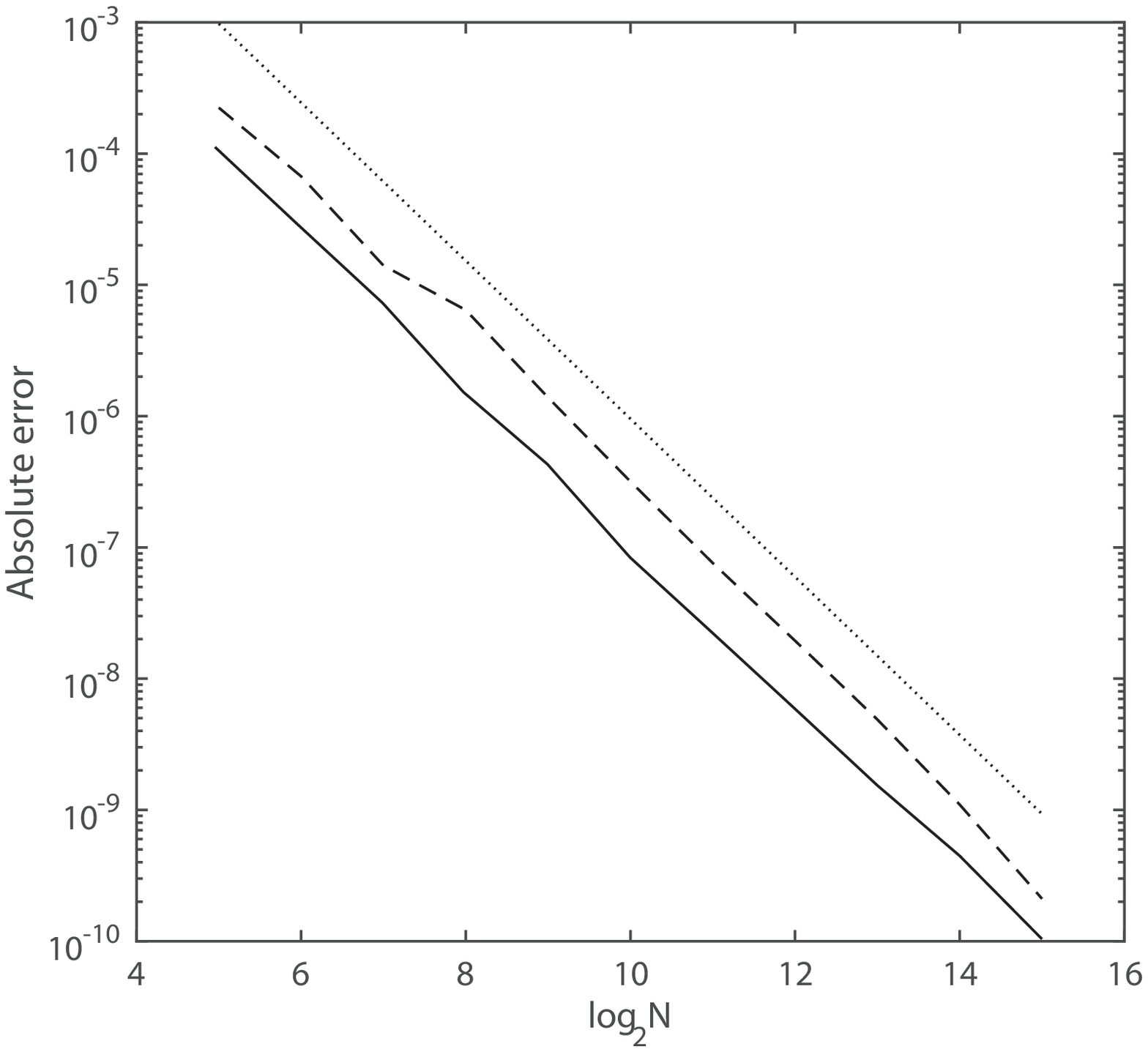}
\caption{The results for $f_1$ with $c_1=1.3$ (top), $f_2$ with $c_2=1$ (middle) and $f_2$ with $c_2=2$ (bottom) by using order 2 Sobol' sequences (dashed) and tent-transformed lattice rules (solid). The left column shows the results for $s=20$ and $\omega_j=1/j^2$, while the right column does for $s=100$ and $\omega_j=1/j^3$. In each graph, the dotted line corresponds to $O(N^{-2})$ convergence.}
\label{fig:func1}
\end{figure*}

We put $s=20$ and $\omega_j=1/j^2$ for the integrands. Since the problem is high-dimensional, 
we focus on tent-transformed lattice rules.
We search for a generating vector using the fast CBC algorithm 
based on the quality criterion \eqref{eq:cbc_tent_criterion} with the choice of the weights $\gamma_j=1/(4c'j^2)$. 
We compare the performance of tent-transformed lattice rules with 
that of order 2 Sobol' sequences as implemented in \cite{Dicweb}.
Here higher order digital nets and sequences introduced in \cite{Dic08} are known 
to achieve high order convergence for non-periodic smooth integrands.
A recent improvement even shows that those nets and sequences achieve 
the optimal order of the error convergence \cite{GSY18},
so that we employ order 2 Sobol' sequences as a competitor for present numerical experiments.
The absolute integration errors as functions of $\log_2 N$ are shown in the left column of Figure~\ref{fig:func1}.
For $f_2$ with $c_2=1$, order 2 Sobol' sequences perform slightly better than tent-transformed lattice rules.
For $f_1$ and $f_2$ with $c_2=2$, however, tent-transformed lattice rules outperform order 2 Sobol' sequences for many $N$.
For any case, the integration error converges approximately with order $N^{-2}$, which supports our theoretical result.

Finally we put $s=100$ and $\omega_j=1/j^3$ for the same integrands. 
We search for a generating vector using the fast CBC algorithm 
based on \eqref{eq:cbc_tent_criterion} with $\gamma_j=1/(4c'j^3)$.
The absolute integration errors as functions of $\log_2 N$ are shown in the right column of Figure~\ref{fig:func1}, 
where we see that tent-transformed lattice rules compare well with order 2 Sobol' sequences.

\begin{acknowledgements}
This work was supported by 
JSPS Grant-in-Aid for Young Scientists No.~15K20964 (T.~G.), 
JSPS Grant-in-Aid for JSPS Fellows No.~17J00466 (K.~S.) and No.~17J02651 (T.~Y.),
and JST CREST.
\end{acknowledgements}

\appendix

\section{Proof of Lemma~\ref{lem:kernel_tent_Fourier}}\label{app:kernel_tent_Fourier}
In order to prove Lemma~\ref{lem:kernel_tent_Fourier}, we need the following result.
\begin{lemma}\label{lem:sin_tent_Fourier}
For $k\in \NN$, we have
\[ \sin(2\pi k\phi(x)) = \frac{8}{\pi}\sum_{\substack{\ell =1\\ \ell\colon \odd}}^{\infty}\frac{k}{4k^2-\ell^2}\cos(2\pi \ell x).\]
\end{lemma}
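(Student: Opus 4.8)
The plan is to compute the ordinary Fourier series of $g(x):=\sin(2\pi k\phi(x))$ on $[0,1]$ directly and read off the coefficients. First I would record the piecewise form of the tent map, namely $\phi(x)=2x$ on $[0,1/2]$ and $\phi(x)=2-2x$ on $[1/2,1]$, and observe that $g$ is continuous on $[0,1]$ with $g(0)=g(1)=0$, so its $1$-periodic extension is continuous. Since $\phi(1-x)=\phi(x)$ we have $g(1-x)=g(x)$, and combined with $1$-periodicity this gives $g(-x)=g(x)$; hence $g$ is even and its Fourier series is a pure cosine series $\sum_{\ell\ge0}b_\ell\cos(2\pi\ell x)$ with no sine terms. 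This already explains the shape of the claimed identity.

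Next I would compute the coefficients $b_\ell=2\int_0^1 g(x)\cos(2\pi\ell x)\rd x$ for $\ell\ge1$ (and check $\int_0^1 g=0$ to kill the $\ell=0$ term). The key simplification is a folding step: splitting the integral at $x=1/2$ and applying the substitution $x\mapsto1-x$ on the second half, the two pieces coincide because $\cos(2\pi\ell(1-x))=\cos(2\pi\ell x)$ and $\phi$ is symmetric about $1/2$, so that $b_\ell=4\int_0^{1/2}\sin(4\pi kx)\cos(2\pi\ell x)\rd x$. Then the product-to-sum identity $\sin A\cos B=\tfrac12(\sin(A+B)+\sin(A-B))$ reduces everything to the elementary integrals $\int_0^{1/2}\sin(2\pi mx)\rd x=(1-(-1)^m)/(2\pi m)$, evaluated at $m=2k\pm\ell$.

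At this point the parity bookkeeping does the work: since $2k$ is even, both $2k+\ell$ and $2k-\ell$ have the parity of $\ell$, so the two contributions vanish for even $\ell$ and for odd $\ell$ combine to $\tfrac{2}{\pi}\big(\tfrac1{2k+\ell}+\tfrac1{2k-\ell}\big)=\tfrac{8k}{\pi(4k^2-\ell^2)}$, exactly the claimed coefficient. I would take care that the formula for $\int_0^{1/2}\sin(2\pi mx)\rd x$ remains valid for negative odd $m$ (relevant when $\ell>2k$), which it does since both sides are odd in $m$; note also that $4k^2-\ell^2\neq0$ because $\ell$ is odd while $2k$ is even, so no denominator vanishes.

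Finally I would justify that the identity holds pointwise, not merely in $L^2$. Because $b_\ell=8k/(\pi(4k^2-\ell^2))=O(\ell^{-2})$, the cosine series converges absolutely and uniformly; together with the continuity of $g$ this yields pointwise equality for every $x\in[0,1]$. I do not anticipate a genuine obstacle here: the computation is a direct Fourier expansion, and the only points demanding care are the folding step that collapses the two halves of the tent, the parity argument that selects odd $\ell$, and the sign convention in the elementary integral when $m<0$.
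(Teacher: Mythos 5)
Your proposal is correct and follows essentially the same route as the paper: both compute the cosine coefficients of $\sin(2\pi k\phi(x))$ by exploiting the symmetry of the tent map (your fold at $x=1/2$ is exactly the paper's reduction to $\int_0^1\sin(2\pi kx)\cos(\pi\ell x)\rd x$) and then apply the product-to-sum identity with the same parity bookkeeping, arriving at the coefficient $\frac{8k}{\pi(4k^2-\ell^2)}$ for odd $\ell$. Your added care about the sign convention for $m=2k-\ell<0$ and the justification of pointwise (uniform) convergence via the $O(\ell^{-2})$ decay are sound refinements that the paper leaves implicit.
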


\begin{proof}
Let $\ell\in \NN_0$. Since $\phi$ is given by
\[ \phi(x)=\begin{cases}
2x & \text{if $x\in [0,1/2]$}, \\
2-2x & \text{otherwise}, \\
\end{cases}\]
it is an easy exercise to check that 
\begin{align*}
 \int_{0}^{1}\sin(2\pi k\phi(x))\sin(2\pi \ell x) \rd x = 0,
\end{align*}
and
\begin{align*}
\int_{0}^{1}\sin(2\pi k\phi(x))\cos(2\pi \ell x) \rd x & = \int_{0}^{1}\sin(2\pi kx)\cos(\pi \ell x)\rd x \\
& = \frac{1}{2}\int_{0}^{1}\left( \sin((2k+\ell)\pi x)+\sin((2k-\ell)\pi x)\right)\rd x \\
& = \begin{cases}
0 & \text{for even $\ell$,} \\
\displaystyle \frac{4}{\pi}\frac{k}{4k^2-\ell^2} & \text{for odd $\ell$.}
\end{cases}
\end{align*}
Thus the Fourier series of $\sin(2\pi k\phi(\cdot))$ is given by
\begin{align*}
\sin(2\pi k\phi(x)) & = 2\sum_{\ell=1}^{\infty}\cos(2\pi \ell x)\int_{0}^{1}\sin(2\pi k\phi(y))\cos(2\pi \ell y) \rd y \\
& = \frac{8}{\pi}\sum_{\substack{\ell=1\\ \ell\colon \odd}}^{\infty}\frac{k}{4k^2-\ell^2}\cos(2\pi \ell x).
\end{align*}
Hence we complete the proof.\smartqed
\end{proof}

Now we are ready to prove Lemma~\ref{lem:kernel_tent_Fourier}.
\begin{proof}[Proof of Lemma~\ref{lem:kernel_tent_Fourier}]
By definition, we have
\begin{align}\label{eq:tent_kernel}
-1+K_{2,1,1}^{\rsob}(\phi(x),\phi(y)) = b_1(\phi(x))b_1(\phi(y))+b_2(\phi(x))b_2(\phi(y))-\tilde{b}_4(\phi(x)-\phi(y)).
\end{align}
Using the Fourier series of a triangle wave provided in \cite[Chapter~1, 1.444]{GRbook}, we have
\[ b_1(\phi(x))= \phi(x)-\frac{1}{2} = -\frac{4}{\pi^2}\sum_{\substack{k=1\\ k\colon \odd}}^{\infty}\frac{\cos(2\pi kx)}{k^2}. \]
Thus the Fourier series of the first term of \eqref{eq:tent_kernel} is given by
\[ b_1(\phi(x))b_1(\phi(y)) = \frac{1}{\pi^4}\sum_{\substack{k,\ell=1\\ k,\ell\colon \odd}}^{\infty}\frac{16}{k^2\ell^2}\cos(2\pi kx)\cos(2\pi \ell y). \]

Using the Fourier series of $b_2$ as shown in \eqref{eq:fourier_ber} and the equality $\cos(2\pi k\phi(x))=\cos(4\pi kx)$ which holds for any $k\in \NN$ and $x\in [0,1]$, we have
\[ b_2(\phi(x))= \frac{1}{2\pi^2}\sum_{k=1}^{\infty}\frac{\cos(2\pi k\phi(x))}{k^2}=\frac{2}{\pi^2}\sum_{\substack{k=1\\ k\colon \even}}^{\infty}\frac{\cos(2\pi kx)}{k^2}. \]
Thus the Fourier series of the second term is given by
\[ b_2(\phi(x))b_2(\phi(y)) = \frac{1}{\pi^4}\sum_{\substack{k,\ell=1\\ k,\ell\colon \even}}^{\infty}\frac{4}{k^2\ell^2}\cos(2\pi kx)\cos(2\pi \ell y). \]

Finally let us consider the third term of \eqref{eq:tent_kernel}. 
Using the Fourier series of $\tilde{b}_4$ and the equality $\cos(2\pi k\phi(x))=\cos(4\pi kx)$, we have
\begin{align*}
 \tilde{b}_4(\phi(x)-\phi(y)) & = \frac{-2}{(2\pi)^4}\sum_{k=1}^{\infty}\frac{\cos(2\pi k(\phi(x)-\phi(y)))}{k^4} \\
 & = -\frac{2}{\pi^4}\sum_{\substack{k=1\\ k\colon \even}}^{\infty}\frac{\cos(2\pi kx)\cos(2\pi ky)}{k^4} -\frac{1}{8\pi^4}\sum_{k=1}^{\infty}\frac{\sin(2\pi k\phi(x))\sin(2\pi k\phi(y))}{k^4}.
\end{align*}
Using Lemma~\ref{lem:sin_tent_Fourier}, the second term of the last expression is given by
\begin{align}
& -\frac{1}{8\pi^4}\sum_{k=1}^{\infty}\frac{\sin(2\pi k\phi(x))\sin(2\pi k\phi(y))}{k^4} \nonumber \\
& =  -\frac{8}{\pi^6}\sum_{k=1}^{\infty}\frac{1}{k^2}\sum_{\substack{\ell,m=1\\ \ell,m\colon \odd}}^{\infty}\frac{1}{(4k^2-\ell^2)(4k^2-m^2)}\cos(2\pi \ell x)\cos(2\pi m y) \nonumber \\
& = -\frac{8}{\pi^6}\sum_{\substack{\ell,m=1\\ \ell,m\colon \odd}}^{\infty}\cos(2\pi \ell x)\cos(2\pi m y)\sum_{k=1}^{\infty}\frac{1}{k^2(4k^2-\ell^2)(4k^2-m^2)}.\label{eq:sum_three_prod}
\end{align}
Noting that the equalities
\begin{align*}
 \sum_{k=1}^{\infty}\frac{1}{4k^2-\ell^2} = \frac{1}{2\ell^2}\quad \text{and}\quad \sum_{k=1}^{\infty}\frac{1}{(4k^2-\ell^2)^2} = \frac{1}{4\ell^2}\left( \frac{\pi^2}{4}-\frac{2}{\ell^2} \right).
\end{align*}
hold for any positive odd integer $\ell$, the inner sum of \eqref{eq:sum_three_prod} can be evaluated as follows. In case of $\ell = m$, we have
\begin{align*}
\sum_{k=1}^{\infty}\frac{1}{k^2(4k^2-\ell^2)^2} & = \frac{1}{\ell^2}\sum_{k=1}^{\infty}\left( \frac{1}{\ell^2}\left(\frac{1}{k^2}-\frac{4}{4k^2-\ell^2} \right) + \frac{4}{(4k^2-\ell^2)^2}\right) \\
& = \frac{1}{\ell^2}\left( \frac{1}{\ell^2}\left(\frac{\pi^2}{6}-\frac{2}{\ell^2} \right) + \frac{1}{\ell^2}\left( \frac{\pi^2}{4}-\frac{2}{\ell^2} \right)\right) = \frac{1}{\ell^4}\left(\frac{5}{12}\pi^2-\frac{4}{\ell^2} \right).
\end{align*}
Otherwise if $\ell\neq m$, we have
\begin{align*}
& \sum_{k=1}^{\infty}\frac{1}{k^2(4k^2-\ell^2)(4k^2-m^2)} \\
& = \frac{1}{\ell^2 m^2}\sum_{k=1}^{\infty}\left( \frac{1}{k^2}+\frac{4}{\ell^2-m^2}\left( \frac{m^2}{4k^2-\ell^2}-\frac{\ell^2}{4k^2-m^2}\right)\right) \\
& = \frac{1}{\ell^2 m^2}\left( \frac{\pi^2}{6}+\frac{4}{\ell^2-m^2}\left( \frac{m^2}{2\ell^2}-\frac{\ell^2}{2m^2}\right)\right) = \frac{1}{\ell^2m^2}\left( \frac{\pi^2}{6}-\frac{2}{\ell^2}-\frac{2}{m^2}\right).
\end{align*}
By substituting these results on the Fourier series into \eqref{eq:tent_kernel}, the result of the lemma follows.\smartqed
\end{proof}

\section{Proof of Theorem~\ref{thm:worst-case-error_sym}}\label{app:worst-case-error_sym}

In what follows, for a function $f$ defined over $[0,1]^2$, we define a function $\rsym[f]$ by
\[ \rsym[f(x,y)] := \frac{f(x,y)+f(1-x,y)+f(x,1-y)+f(1-x,1-y)}{4}. \]

Since we have
\[ \int_{[0,1]^s}\int_{[0,1]^s}K_{\alpha,\bsgamma,s}^{\rsobodda}(\bsx,\bsy)\rd \bsx\rd \bsy=1
\quad \text{and}\quad 
\int_{[0,1]^s}K_{\alpha,\bsgamma,s}^{\rsobodda}(\bsx,\bsy)\rd \bsy = 1, \]
for any $\bsx\in [0,1]^s$, it follows from \eqref{eq:worst-case-error-formula} that
\begin{align}\label{eq:orst-case_sym_odd}
& (e^{\wor}(P_{N,\bsz}^{\rsym};\Hcal(K_{\alpha,\bsgamma,s}^{\rsobodda})))^2 \notag \\
& = -1+ \frac{1}{(2^sN)^2}\sum_{\bsx,\bsy\in P_{N,\bsz}^{\rsym}}K_{\alpha,\bsgamma,s}^{\rsobodda}(\bsx ,\bsy) \notag \\
& = -1+ \frac{1}{N^2}\sum_{\bsx,\bsy\in P_{N,\bsz}}\frac{1}{2^{2s}}\sum_{v,w\subseteq 1:s}K_{\alpha,\bsgamma,s}^{\rsobodda}(\rsym_v(\bsx) ,\rsym_w(\bsy)) \notag \\
& =\frac{1}{N^2}\sum_{\bsx,\bsy\in P_{N,\bsz}}\sum_{\emptyset \neq u\subseteq 1:s}\gamma_u \prod_{j\in u}\left[ -1 + \rsym[K_{\alpha,1,1}^{\rsobodda}(x_j ,y_j)]\right].
\end{align}

As we assume that $\alpha$ is even, we have
\begin{align}\label{eq:sym_kernel}
& -1+\rsym[K_{\alpha,1,1}^{\rsobodda}(x ,y)] \notag \\
& = \rsym\left[\sum_{\substack{\tau=1 \\ \tau\colon \odd}}^{\alpha} b_{\tau}(x)b_{\tau}(y) + b_{\alpha}(x)b_{\alpha}(y) - \tilde{b}_{2\alpha}(x-y) \right] \notag \\
& = \sum_{\substack{\tau=1 \\ \tau\colon \odd}}^{\alpha} \rsym\left[b_{\tau}(x)b_{\tau}(y)\right] + \rsym\left[b_{\alpha}(x)b_{\alpha}(y)\right] - \rsym\left[\tilde{b}_{2\alpha}(x-y) \right].
\end{align}
Since $b_\tau(x) = -b_\tau(1-x)$ for odd $\tau$ 
and $b_\tau(x) = b_\tau(1-x)$ for even $\tau$, we have
\[
\rsym[b_\tau(x)b_\tau(y)] =
\begin{cases}
0 & \text{for odd $\tau$},\\
b_\tau(x)b_\tau(y) & \text{for even $\tau$}.
\end{cases}
\]
and the first term of \eqref{eq:sym_kernel} equals $0$.
Considering the Fourier series of $b_\alpha$ for even $\alpha$:
\[
b_\alpha(x)
= \frac{(-1)^{1+\alpha/2}}{(2\pi)^\alpha} \sum_{k\in \ZZ\setminus \{0\}}\frac{e^{2\pi \ri kx}}{k^\alpha}
= \frac{2 \cdot (-1)^{1+\alpha/2}}{(2\pi)^\alpha} \sum_{k=1}^{\infty}\frac{\cos(2\pi kx)}{k^\alpha},
\]
the Fourier series of the second term of \eqref{eq:sym_kernel} is given by
\begin{align*}
\rsym\left[b_{\alpha}(x)b_{\alpha}(y)\right]
= b_{\alpha}(x)b_{\alpha}(y)
= \frac{4}{(2\pi)^{2\alpha}}\sum_{k,\ell=1}^{\infty}\frac{1}{k^\alpha \ell^\alpha}\cos(2\pi kx)\cos(2\pi \ell y).
\end{align*}
Finally, using the Fourier series of $\tilde{b}_{2\alpha}$, the Fourier series of the third term of \eqref{eq:sym_kernel} is given by
\begin{align*}
\rsym\left[\tilde{b}_{2\alpha}(x-y) \right]
= \frac{-2}{(2\pi)^{2\alpha}} \sum_{k=1}^{\infty}\frac{\rsym\left[\cos(2\pi k(x-y))\right]}{k^{2\alpha}} \\
= \frac{-2}{(2\pi)^{2\alpha}} \sum_{k=1}^{\infty}\frac{\cos(2\pi kx)\cos(2\pi ky)}{k^{2\alpha}}.
\end{align*}
By substituting these results on the Fourier series into \eqref{eq:sym_kernel}, we have
\[ -1+\rsym[K_{\alpha,1,1}^{\rsobodda}(x,y)] = \frac{1}{(2\pi)^{2\alpha}}\sum_{k,\ell =1}^{\infty}c^{\rsym}(k,\ell)\cos(2\pi kx)\cos(2\pi \ell y), \]
where 
\[ c^{\rsym}(k,\ell) = \begin{cases}
\displaystyle \frac{6}{k^{2\alpha}} & k= \ell, \\
\displaystyle \frac{4}{k^\alpha \ell^\alpha} & k\neq \ell.
\end{cases} \]
The rest of the proof follows exactly in the same manner as the proof of Theorem~\ref{thm:worst-case-error_tent}.


\end{document}